\newcounter{corr}
\definecolor{violet}{rgb}{0.580,0.,0.827}
\newcommand{\corr}[3]{\typeout{Warning : a correction remains in page
\thepage}
				\stepcounter{corr}        
				{\color{blue}\ifmmode\text{\,\sout{\ensuremath{#1}}\,}\else\sout{#1}\fi}
        {\color{red}#2}
        {\color{violet} #3}}
\newcommand{\email}[1]{\href{mailto:#1}{#1}}
\DeclareMathOperator{\optr}{tr}
\DeclareMathOperator{\opdev}{dev}
\newcommand{\GRAD}{\vec{\nabla}}
\newcommand{\GRADs}{\GRAD_\symm}
\newcommand{\DIV}{\vec{\nabla}\cdot}
\newcommand{\GRADh}{\GRAD_h}
\newcommand{\ud}{{\rm d}}
\newcommand{\st}{\; ; \;}
\newcommand{\Id}[1][d]{\vec{I}_{#1}}
\newcommand{\norm}[2][]{\|#2\|_{#1}}
\newcommand{\seminorm}[2][]{|#2|_{#1}}
\newcommand{\symm}{{\rm s}}
\newcommand{\strain}{\matr{\varepsilon}}
\newcommand{\Real}{\mathbb{R}}
\newcommand{\Natural}{\mathbb{N}}
\newcommand{\Integer}{\mathbb{Z}}
\newcommand{\tenf}{\mathfrak{C}}
\newcommand{\Lvec}[2][{2}]{\textbf{L}^{#1}(#2)}
\newcommand{\Lmats}[2][{2}]{\mathbb{L}^{#1}_\symm(#2)}
\newcommand{\Hvec}[2][{1}]{\textbf{H}^{#1}(#2)}
\newcommand{\HvecD}[2][{1}]{\textbf{H}_0^{#1}(#2)}
\newcommand{\Hmat}[2][{1}]{\mathbb{H}^{#1}(#2)}
\newcommand{\Hmats}[2][{1}]{\mathbb{H}^{#1}_\symm(#2)}
\newcommand{\Pvec}[2][k]{\textbf{P}^{#1}(#2)}
\newcommand{\Pmat}[2][k]{\mathbb{P}^{#1}(#2)}
\newcommand{\Pmats}[2][k]{\mathbb{P}_\symm^{#1}(#2)}
\newcommand{\Mh}[1][h]{\mathcal{M}_{#1}}
\newcommand{\Th}[1][h]{\mathcal{T}_{#1}}
\newcommand{\Fh}[1][h]{\mathcal{F}_{#1}}
\newcommand{\Fhi}{\Fh^{{\rm i}}}
\newcommand{\Fhb}{\Fh^{{\rm b}}}
\newcommand{\normal}{\vec{n}}
\newcommand{\UT}{\underline{\vec{U}}^k_T}
\newcommand{\Uh}{\underline{\vec{U}}^k_h}
\newcommand{\UhD}{\underline{\vec{U}}^k_{h,\mathrm{D}}}
\newcommand{\Ph}[1][k]{P_h^{#1}}
\newcommand{\GTs}{\vec{G}^{k}_{\symm,T}}
\newcommand{\rT}{\vec{r}^{k+1}_T}
\newcommand{\Ih}{\underline{I}^k_h}
\newcommand{\IvT}{\underline{\vec{I}}^k_T}
\newcommand{\Ivh}{\underline{\vec{I}}^k_h}
\newcommand{\lproj}[2][h]{\pi_{#1}^{#2}}
\newcommand{\vlproj}[2][h]{\vec\pi_{#1}^{#2}}
\newcommand{\ms}[1][]{\matr{\sigma}_{#1}}
\newcommand{\oms}[1][]{\overline{\matr{\sigma}}_{#1}}
\newcommand{\vu}[1][]{\vec{u}_{#1}}
\newcommand{\vv}[1][]{\vec{v}_{#1}}
\newcommand{\vw}[1][]{\vec{w}_{#1}}
\newcommand{\vf}[1][]{\vec{f}_{#1}}
\newcommand{\ovu}[1][]{\overline{\vec{u}}_{#1}}
\newcommand{\ovf}[1][]{\overline{\vec{f}}_{#1}}
\newcommand{\uvu}[1][h]{\underline{\vec{u}}_{#1}}
\newcommand{\uvv}[1][h]{\underline{\vec{v}}_{#1}}
\newcommand{\uvw}[1][h]{\underline{\vec{w}}_{#1}}
\newcommand{\uvy}[1][h]{\underline{\vec{y}}_{#1}}
\newcommand{\uvz}[1][h]{\underline{\vec{z}}_{#1}}
\newcommand{\uve}[1][h]{\underline{\vec{e}}_{#1}}
\newcommand{\jump}[2][F]{[#2]_{#1}}
\newcommand{\wavg}[2][F]{\{#2\}_{#1}}
\newcommand{\diff}[1][]{\matr{\kappa}_{#1}}
\newcommand{\udiff}[1][]{\overline{\kappa}_{#1}}
\newcommand{\ldiff}[1][]{\underline{\kappa}_{#1}}
\newcommand{\sdiff}[1][F]{\kappa_{#1}}
\newcommand{\tph}[1][h]{\widehat{p}_h}
\newcommand{\tF}{t_{\rm F}}
\newcommand{\dt}{{\rm d}_t}
\newcommand{\ddt}{\delta_t}
\newcommand{\term}{\mathfrak{T}}
\newtheorem{theorem}{Theorem}
\newtheorem{proposition}[theorem]{Proposition}
\newtheorem{lemma}[theorem]{Lemma}
\theoremstyle{remark}
\newtheorem{remark}[theorem]{Remark}
\theoremstyle{definition}
\newtheorem{assumption}[theorem]{Assumption}
\title{A high-order discretization of nonlinear poroelasticity
\footnote{This work was partially funded by the Bureau de Recherches G\'{e}ologiques et Mini\`{e}res. The work of M. Botti was additionally partially supported by Labex NUMEV (ANR-10-LABX-20) ref. 2014-2-006.
The work of D. A. Di Pietro was additionally partially supported by project HHOMM (ANR-15-CE40-0005).}}
\author[1]{Michele Botti\footnote{\email{michele.botti@polimi.it}}}
\author[2]{Daniele A. Di Pietro\footnote{\email{daniele.di-pietro@umontpellier.fr}}}
\author[3]{Pierre Sochala\footnote{\email{p.sochala@brgm.fr}}}
\affil[1]{Department of Mathematics, Politecnico di Milano, 20133 Milano, Italy}
\affil[2]{IMAG, Université de Montpellier, CNRS, 34095 Montpellier, France}
\affil[3]{Bureau de Recherches Géologiques et Minières, 45060 Orléans, France}
\begin{document}
\maketitle

\begin{abstract}
In this work we construct and analyze a nonconforming high-order discretization method for the quasi-static single-phase nonlinear poroelasticity problem describing Darcean flow in a deformable porous medium saturated by a slightly compressible fluid. The nonlinear elasticity operator is discretized using a Hybrid High-Order method, while the Darcy operator relies on a Symmetric Weighted Interior Penalty discontinuous Galerkin scheme. 
The method is valid in two and three space dimensions, delivers an inf-sup stable discretization on general meshes 
including polyhedral elements and nonmatching interfaces, supports arbitrary approximation orders, and has a reduced cost 
thanks to the possibility of statically condensing a large subset of the unknowns for linearized versions of the problem. 
Moreover, the proposed construction can handle both nonzero and vanishing specific storage coefficients.
\medskip\\
\textbf{Key words.} Nonlinear poroelasticity,
nonlinear Biot problem,
Korn's inequality,
Hybrid High-Order methods,
discontinuous Galerkin methods,
polyhedral meshes
\medskip\\
\textbf{AMS subject classification.} 65N08, 65N30, 76S05
\end{abstract}

\section{Introduction}

In this paper we analyze a Hybrid High-Order (HHO) discretization method for nonlinear poroelastic models. 
We overstep a previous work~\cite{Boffi.Botti.Di-Pietro:16} devoted to the linear Biot model~\cite{Biot:41,Terzaghi:43}  by incorporating more general, possibly nonlinear stress-strain constitutive laws~\cite{FVCA8}.
The model is valid under the assumptions of small deformations of the rock matrix, small variations of the porosity, and 
small relative variations of the fluid density. 
The interest of the poroelastic models considered here is particularly manifest in geosciences applications~\cite{Hu.Winterfield.ea:13,Jin.Zoback:17,Minkoff.Stone.ea:03}, where fluid flows in geological subsurface, modeled as a porous media, induce a deformation of the rock matrix. The challenge is then to design a discretization method able to
(i) treat a complex geometry with polyhedral meshes and nonconforming interfaces, 
(ii) handle possible heterogeneities of the poromechanical parameters and nonlinearities of the stress-strain relation, and 
(iii) deal with the numerical instabilities encountered in this type of coupled problem.

Let $\Omega\subset\Real^d$, $d\in\{2,3\}$, denote a bounded connected polyhedral domain with boundary $\partial\Omega$ and outward normal $\normal$. Without loss of generality, we assume that the domain is scaled so that its diameter is equal to 1.
For a given finite time $\tF>0$, volumetric load $\vf$, fluid source $g$, we consider the nonlinear poroelasticity problem that consists in finding a vector-valued displacement field $\vu$ and a 
scalar-valued pore pressure field $p$ solution of
\begin{subequations}\label{eq:nl_biot.strong}
  \begin{alignat}{2}
    \label{eq:nl_biot.strong.mech}
    -\DIV\ms(\cdot,\GRADs\vu) + \alpha\GRAD p &= \vf &\qquad&\text{in $\Omega\times (0,\tF)$},
    \\
    \label{eq:nl_biot.strong.flow}
    C_0\dt p + \alpha\dt(\DIV\vu) - \DIV(\diff(\cdot)\GRAD p) &= g &\qquad&\text{in $\Omega\times (0,\tF)$},
  \end{alignat}
where $\GRADs$ denotes the symmetric gradient, $\dt$ denotes the time derivative, $\alpha$ is the Biot--Willis coefficient, $C_0\ge 0$ is the constrained specific storage coefficient, and, denoting by $\Real^{d\times d}_\symm$ the set of real-valued, symmetric square matrices, $\diff:\Omega\to\Real^{d\times d}_\symm$ is the uniformly elliptic permeability tensor field which, for real numbers $0<\ldiff\le\udiff$, satisfies for almost every (a.e.) $\vec{x}\in\Omega$ and all $\vec{\xi}\in\Real^d$,
$$
\ldiff\seminorm{\vec\xi}^2\le\diff(\vec{x})\vec{\xi}\cdot\vec{\xi}\le\udiff\seminorm{\vec\xi}^2.
$$
For the sake of simplicity, we assume in the following discussion that $\diff$ is piecewise constant on a polyhedral partition $P_\Omega$ of $\Omega$, an assumption typically verified in geoscience applications.
In the poroelasticity theory \cite{Coussy:04}, the medium is modeled as a continuous superposition of solid and fluid phases. 
The momentum equilibrium equation~\eqref{eq:nl_biot.strong.mech} is based on the Terzaghi decomposition \cite{Terzaghi:43} of the total stress tensor into a mechanical contribution and a pore pressure contribution.
Examples and assumptions for the constitutive stress-strain relation $\ms:\Omega\times\Real^{d\times d}_\symm\to \Real^{d\times d}_\symm$ are detailed in Section \ref{sec:continuous.setting:laws}; we refer the reader to \cite{Bemer.Bouteca:01,Biot:73} for a physical and experimental investigation of the nonlinear behavior of porous solids.
On the other hand, the mass conservation equation~\eqref{eq:nl_biot.strong.flow} is derived for fully saturated porous media assuming Darcean flow. The first two terms of this equation quantify the variation of fluid content in the pores. The dimensionless coupling coefficient $\alpha$ expresses the amount of fluid that can be forced into the medium by a variation of pore volume for a constant fluid pressure, while $C_0$ measures the amount of fluid that can be forced into the medium by pressure increments due to compressibility of the structure. The case of a solid matrix with incompressible grains corresponds to the limit value $C_0=0$.
Following \cite{Showalter:00,Zenisek:84}, for the sake of simplicity we take $\alpha=1$ in what follows.
To close the problem, we enforce homogeneous boundary conditions corresponding to a clamped, impermeable boundary, i.e.,
  \begin{align}
    \label{eq:nl_biot.strong:bc.u}
    \vec{u} &= \vec{0} \qquad\text{on $\partial\Omega\times(0,\tF)$},
    \\
    \label{eq:nl_biot.strong:bc.p}
    (\diff(\cdot)\GRAD p)\cdot\normal &= 0 \qquad\text{on $\partial\Omega\times(0,\tF)$},
  \end{align}
as well as the following initial condition which prescribes the initial fluid content:
\begin{equation}\label{eq:nl_biot.strong:initial}
  C_0 p(\cdot,0)+\DIV\vu(\cdot,0)=\phi^0(\cdot). 
\end{equation}
In the case $C_0=0$, we also need the following compatibility conditions on $g$ and $\phi^0$ and zero-average constraint on $p$:
\begin{equation}\label{eq:compatibility}
  \int_\Omega \phi^0 = 0, \qquad\quad\; \int_\Omega g(\cdot,t) = 0,\quad\text{ and }\quad\int_\Omega p(\cdot,t) = 0
  \qquad\forall t\in(0,\tF).
\end{equation}
\end{subequations}

When discretizing the poroelasticity system~\eqref{eq:nl_biot.strong}, the main challenges are to ensure stability and convergence under mild assumptions on the nonlinear stress-strain relation and on the permeability field, and to prevent localized pressure oscillations arising in the case of poorly permeable, quasi-incompressible porous media. 
Since the latter issue is in part related to the saddle point structure in the coupled equations for $C_0=0$ and small $\ldiff$, the discrete spaces for the displacement and the pressure should satisfy an inf-sup condition. Indeed, as observed in~\cite{Murad.Loula:94,Haga.Osnes.ea:12,Phillips.Wheeler:09} in the context of finite element discretizations of the linear poroelasticity problem, the inf-sup condition yields an $L^2$-estimate of the discrete pressure independent of $\ldiff^{-1}$, and allows one to prove the convergence of the approximate pressure towards the continuous pressure also in the incompressible case $C_0=0$.
We notice, however, that the problem of spurious pressure oscillations is actually more involved than a simple saddle-point coupling issue. For instance, it has been recently pointed out in~\cite{Rodrigo.Gaspar.ea:15} that, even for discretization methods leading to an inf-sup stable discretization of the Stokes problem in the steady case, pressure oscillations can arise owing to a lack of monotonicity of the discrete operator.
The robustness with respect to spurious oscillations has been numerically observed in~\cite[Section 6.2]{Boffi.Botti.Di-Pietro:16} for a HHO--dG discretization of the linear poroelasticity model.

In this work, we present and analyze a nonconforming space discretization of problem~\eqref{eq:nl_biot.strong} where the nonlinear elasticity operator is discretized using the HHO method of~\cite{Botti.Di-Pietro.Sochala:17} (c.f. also~\cite{Di-Pietro.Ern:15,Di-Pietro.Drouniou:15}), while the Darcy operator relies on the Symmetric Weighted Interior Penalty (SWIP) method of~\cite{Di-Pietro.Ern.ea:08}.
The proposed method has several assets:
\begin{inparaenum}[(i)]
\item it is valid in two and three space dimensions;
\item it delivers an inf-sup stable discretization on general spatial meshes including, e.g., polyhedral elements and nonmatching interfaces;
\item it allows one to increase the space approximation order to accelerate convergence in the presence of (locally) regular solutions.
\end{inparaenum}
Compared to the method proposed in~\cite{Boffi.Botti.Di-Pietro:16} for the linear poroelasticity problem, there are two main differences in the design.
First, for a given polynomial degree $k\ge 1$, the symmetric gradient reconstruction sits in the full space of tensor-valued polynomials of total degree $\le k$, as opposed to symmetric gradients of vector-valued polynomials of total degree $\le (k+1)$. Following~\cite{Di-Pietro.Droniou.ea:18,Botti.Di-Pietro.Sochala:17}, this modification is required to obtain optimal convergence rates when considering nonlinear stress-strain laws. Second, the right-hand side of the discrete problem of Section \ref{sec:disc.pb} is obtained by taking the average in time of the loading force $\vf$ and fluid source $g$ over a time step instead of their value at the end of the time step. This modification allows us to prove stability and optimal error estimates under significantly weaker time regularity assumptions on data (cf. Remarks \ref{rem:time_reg1} and \ref{rem:time_reg2}). 
Finally, in Section \ref{sec:discrete.spaces} we give a new simple proof of a discrete counterpart of Korn's inequality on HHO spaces, not requiring particular geometrical assumptions on the mesh.
The interest of these results goes beyond the specific application considered here.

The material is organized as follows.
In Section~\ref{sec:continuous_setting} we present the assumptions on the stress-strain law and the variational formulation of the nonlinear poroelasticity problem. 
In Section~\ref{sec:discrete_setting} we define the space and time meshes and the discrete spaces for the displacement and the pressure fields.  
In Section~\ref{sec:discretization} we define the discrete counterparts of the elasticity, Darcy, and hydromechanical coupling operators and formulate the discrete problem.
In Section~\ref{sec:stability} we prove the well-posedness of the scheme by deriving an a priori estimate on the discrete solution that holds also when the specific storage coefficient vanishes. 
The convergence analysis of the method is carried out in Section~\ref{sec:convergence}. 
Finally, Section~\ref{sec:num.res} contains numerical tests to asses the performance of the method.

%---------------------------------------------------------------------------------------------------------

\section{Continuous setting} \label{sec:continuous_setting}

In this section we introduce the notation for function spaces, formulate the assumptions on the stress-strain law, and derive a weak formulation of problem \eqref{eq:nl_biot.strong}.

\subsection{Notation for function spaces}

Let $X\subset\overline{\Omega}$.
Spaces of functions, vector fields, and tensor fields defined over $X$ are respectively denoted by italic capital, boldface Roman capital, and special Roman capital letters.
The subscript ``s'' appended to a special Roman capital letter denotes a space of symmetric tensor fields.
Thus, for example, $L^2(X), \Lvec{X}$, and $\Lmats{X}$ respectively denote the spaces of square integrable functions, vector fields, and symmetric tensor fields over $X$.
For any measured set $X$ and any $m\in\Integer$, we denote by $H^m(X)$ the usual Sobolev space of functions that have weak partial derivatives of order up to $m$ in $L^2(X)$, with the convention that $H^0(X)\coloneq L^2(X)$, while $C^m(X)$ and $C_{\rm c}^\infty(X)$ denote, respectively, the usual spaces of $m$-times continuously differentiable functions and infinitely continuously differentiable functions with compact support on $X$. We denote by $(\cdot,\cdot)_X$ and $(\cdot,\cdot)_{m,X}$ the usual scalar products in $L^2(X)$ and $H^{m}(X)$ respectively, and by $\norm[X]{{\cdot}}$ and $\norm[m,X]{{\cdot}}$ the induced norms.

For a vector space $V$ with scalar product $(\cdot,\cdot)_V$, the space $C^{m}(V)\coloneq C^{m}([0,\tF];V)$ is spanned by $V$-valued functions that are $m$-times continuously differentiable in the time interval $[0,\tF]$. The space $C^{m}(V)$ is a Banach space when equipped with the norm 
$$
\norm[C^{m}(V)]{\varphi}\coloneq\max_{0\le i\le m}\max_{t\in[0,\tF]}\norm[V]{\dt^{i}\varphi(t)}.
$$ 
Similarly, the Hilbert space $H^{m}(V)\coloneq H^{m}((0,\tF);V)$ is spanned by $V$-valued functions of the time interval,  and the norm $\norm[H^{m}(V)]{{\cdot}}$ is induced by the scalar product
$$
(\varphi,\psi)_{H^{m}(V)}=\sum_{j=0}^m \int_0^{\tF} (\dt^{j}\varphi(t),\dt^{j}\psi(t))_V \ud t
\qquad \forall\varphi,\psi \in H^{m}(V).
$$

\subsection{Stress-strain law}\label{sec:continuous.setting:laws}

The following assumptions on the stress-strain relation are required to obtain a well-posed weak formulation of the nonlinear poroelasticity problem.
\begin{assumption}[Stress-strain relation]
  \label{ass:hypo}
We assume that the stress function $\ms:\Omega\times\Real^{d\times d}_\symm\to\Real^{d\times d}_\symm$ is a 
Carath\'eodory function, i.e., $\ms(\vec{x},\cdot)$ is continuous on $\Real^{d\times d}_\symm$ for almost every $\vec{x}\in\Omega$ and $\ms(\cdot,\matr{\tau})$ is measurable on $\Omega$ for all $\matr{\tau}\in\Real^{d\times d}_\symm$.
Moreover, there exist real numbers $C_{\rm gr},C_{\rm cv}\in(0,+\infty)$ such that, for a.e. 
$\vec{x}\in\Omega$ and all $\matr{\tau},\matr{\eta}\in\Real^{d\times d}_\symm$, the following conditions hold:
\begin{subequations}\label{eq:hypo}
  \begin{align} 
  \label{eq:hypo.growth}
  \seminorm[d\times d]{\ms(\vec{x},\matr{\tau})} &\le C_{\rm gr} \seminorm[d\times d]{\matr{\tau}},
  &&\text{(growth)}
  \\
  \label{eq:hypo.coercivity}
  \ms(\vec{x},\matr{\tau}) :\matr{\tau} &\ge C_{\rm cv}^2 \seminorm[d\times d]{\matr{\tau}}^2,
  &&\text{(coercivity)}
  \\
  \label{eq:hypo.monotonicity}
  \left(\ms(\vec{x},\matr{\tau})-\ms(\vec{x},\matr{\eta})\right):\left(\matr{\tau}-\matr{\eta}\right) &> 0 \mbox{ if }\matr{\eta}\neq\matr{\tau}.
  &&\text{(monotonicity)}
  \end{align}
\end{subequations}
Above, we have introduced the Frobenius product such that, for all $\matr{\tau},\matr{\eta}\in\Real^{d\times d}$, $\matr{\tau}:\matr{\eta}\coloneq\sum_{1\le i,j\le d}\tau_{ij}\eta_{ij}$ with corresponding matrix norm such that, for all $\matr{\tau}\in\Real^{d\times d}$, $\seminorm[d\times d]{\matr{\tau}}\coloneq(\matr{\tau}:\matr{\tau})^{\frac12}$.
\end{assumption}
Three meaningful examples for the stress-strain relation $\ms:\Omega\times\Real^{d\times d}_\symm\to \Real^{d\times d}_\symm$ in \eqref{eq:nl_biot.strong.mech} are: 
\begin{itemize}
  \item The (possibly heterogeneous) \textit{linear elasticity model} given by the usual Hooke's law
  \begin{equation}
    \label{eq:LinCauchy}
    \ms(\cdot,\matr{\tau})=\lambda(\cdot)\optr(\matr{\tau})\Id+2\mu(\cdot)\matr{\tau},
  \end{equation}
  where $\mu:\Omega\to [\mu_*,\mu^*]$, with $0<\mu_*\le\mu^*<+\infty$, 
  and $\lambda:\Omega\to\Real_+$ are the Lam\'e parameters.
  \item The \textit{nonlinear Hencky--Mises model} of~\cite{Necas:86,Gatica.Stephan:02} corresponding to the mechanical behavior law
  \begin{equation}
    \label{eq:HenckyMises}
    \ms(\cdot,\matr{\tau})=\tilde{\lambda}(\cdot,\opdev(\matr{\tau}))\optr (\matr{\tau})\Id
    +2\tilde{\mu}(\cdot,\opdev(\matr{\tau}))\matr{\tau},
  \end{equation}  
  with nonlinear Lam\'e scalar functions $\tilde\mu:\Omega\times\Real_+\to[\mu_*,\mu^*]$ and 
  $\tilde\lambda:\Omega\times\Real_+\to\Real_+$ depending on the deviatoric part 
  $\opdev(\matr{\tau})\coloneq\optr (\matr{\tau}^2)-\frac1d\optr (\matr{\tau})^2$ of the strain.
 \item The \textit{isotropic reversible hyperelastic damage model}~\cite{Cervera.Chiumenti.ea:10}, for which the stress-strain relation reads
  \begin{equation}
    \label{eq:Damage}
    \ms(\cdot,\matr{\tau})=(1-D(\cdot,\matr{\tau}))\tenf(\cdot)\matr{\tau}.
  \end{equation}
  where $D:\Omega\times\Real^{d\times d}_\symm\to[0,1]$ is the scalar damage function and 
  $\tenf:\Omega\to\Real^{d^4}$ is a fourth-order symmetric and uniformly elliptic tensor field, namely, for some 
  strictly positive constants $\underline{C}$ and $\overline{C}$, it holds
  $$
    \underline{C}\seminorm[d\times d]{\matr{\tau}}^2\le 
    \tenf(\vec{x})\matr{\tau}:\matr{\tau}\le \overline{C}\seminorm[d\times d]{\matr{\tau}}^2
    \quad\forall\matr{\tau}\in\Real^{d\times d},\;\forall\vec{x}\in\Omega.
  $$
\end{itemize}

Being linear, the Cauchy stress tensor in \eqref{eq:LinCauchy} clearly satisfies the previous assumptions.
Moreover, under some mild requirements (cf. \cite{Botti.Riedlbeck:18,Droniou.Lamichhane:15}) on the nonlinear Lam\'e 
scalar functions $\tilde\mu$ and $\tilde\lambda$ in \eqref{eq:HenckyMises} and on the damage function $D$ 
in \eqref{eq:Damage}, it can be proven that also the Hencky--Mises model and the isotropic reversible damage model satisfy Assumption~\ref{ass:hypo}.

\subsection{Weak formulation}\label{sec:weak_form}

At each time $t\in [0,\tF]$, the natural functional spaces for the displacement $\vu(t)$ and pore pressure $p(t)$ taking into account the boundary condition \eqref{eq:nl_biot.strong:bc.u} and the zero average constraint \eqref{eq:compatibility} are, respectively, 
$$
  \vec{U}\coloneq\HvecD{\Omega}
  \qquad\text{and}\qquad
  P\coloneq\begin{cases}
  {H}^1(\Omega) & \text{if $C_0>0$,} \\ 
  {H}^1(\Omega)\cap L^2_0(\Omega)  & \text{if $C_0=0$,}
  \end{cases}
$$
with $\HvecD{\Omega}\coloneq\left\{\vv\in\Hvec{\Omega}\st {\vv}_{|\partial\Omega}=\vec{0}\right\}$ and $L^2_0(\Omega)\coloneq\left\{q\in L^2(\Omega)\st \int_\Omega q=0\right\}$.
We consider the following weak formulation of problem~\eqref{eq:nl_biot.strong}:
For a loading term $\vf\in L^2(\Lvec{\Omega})$, a fluid source $g\in L^2(L^2(\Omega))$, and an initial datum $\phi^0\in L^2(\Omega)$ that verify \eqref{eq:compatibility} if $C_0>0$, find $\vu\in L^2(\vec{U})$ and $p\in L^2(P)$ such that, for all $\vv\in\vec{U}$, all $q\in P$, and all $\varphi\in C_{\rm c}^\infty((0,\tF))$
\begin{subequations}
  \label{eq:weak_form}
  \begin{align}
    \label{eq:weak_form.mech}
    \int_0^{\tF}a(\vu(t),\vv)\,\varphi(t) \ud t+\int_0^{\tF}\hspace{-1mm}b(\vv,p(t))\,\varphi(t)\ud t
    &=\int_0^{\tF}(\vf(t),\vv)_\Omega\,\varphi(t)\ud t,
    \\
    \label{eq:weak_form.fluid}
    \int_0^{\tF}\left[b(\vu(t),q)-C_0 (p(t),q)_{\Omega}\right] \dt\varphi(t) \ud t 
    +\int_0^{\tF}c(p(t),q)\,\varphi(t) \ud t
    &=\int_0^{\tF}(g(t),q)_\Omega\,\varphi(t) \ud t, 
    \\
    \label{eq:weak_form.initial}
    (C_0 p(0) + \DIV\vu(0),q)_\Omega &= (\phi^0,q)_\Omega,
  \end{align}
\end{subequations}
where we have defined the nonlinear function $a:\vec{U}\times\vec{U}\to\Real$ and the bilinear forms $b:\vec{U}\times P\to\Real$ and $c:P\times P\to\Real$ such that, for all $\vv,\vw\in\vec{U}$ and all $q,r\in P$,
$$
a(\vv,\vw)\coloneq (\ms(\cdot,\GRADs\vv),\GRADs\vw)_\Omega,\qquad
b(\vv,q)\coloneq -(\DIV\vv,q)_\Omega,\qquad
c(q,r)\coloneq (\diff(\cdot)\GRAD r, \GRAD q)_\Omega.
$$
The first term in \eqref{eq:weak_form.mech} is well defined thanks to the growth assumption \eqref{eq:hypo.growth}. Moreover, owing to \eqref{eq:hypo.coercivity} together with Korn's first inequality and Poincaré's inequality, 
$a(\cdot,\cdot)$ and $c(\cdot,\cdot)$ are coercive on $\vec{U}$ and $P$, respectively. The strict monotonicity assumption \eqref{eq:hypo.monotonicity} guarantees the uniqueness of the weak solution.
\begin{remark}[Regularity of the fluid content and of the pore pressure]
  \label{rem:reg_porosity.press_average}
  Using an integration by parts in time in \eqref{eq:weak_form.fluid}, it is inferred that
\begin{equation}
  \label{eq:reg_poro}
\dt\left[C_0 (p,q)_{\Omega}-b(\vu,q)\right]+c(p,q) = (g,q)_\Omega\quad\forall q\in P\quad\text{ in } L^2((0,\tF)). 
\end{equation}
Therefore, defining the fluid content $\phi\coloneq C_0 p + \DIV\vu$, we have that $t\mapsto(\phi(t),q)_\Omega\in H^1{((0,\tF))}\subset C^0([0,\tF])$ for all $q\in P$, and, as a result, \eqref{eq:weak_form.initial} makes sense. Moreover, in the case $C_0>0$, taking $q=1$ in \eqref{eq:reg_poro} and owing to the definition of the bilinear form $c$ and the homogeneous Dirichlet condition \eqref{eq:nl_biot.strong:bc.u}, we infer that
$$
\dt \left(C_0\int_\Omega p(\cdot,t)\right) = \int_\Omega g(\cdot,t)\quad\text{ in } L^2((0,\tF)).
$$
Thus, $t\mapsto\int_\Omega p(\cdot,t)\in H^1{((0,\tF))}$, namely the average of the pore pressure over $\Omega$ is a continuous function in $[0,\tF]$. 
\end{remark}
\section{Discrete setting}\label{sec:discrete_setting}

In this section we define the space and time meshes, recall the definition and properties of $L^2$-orthogonal projectors on local and broken polynomial spaces, and introduce the discrete spaces for the displacement and the pressure.

\subsection{Space mesh}

We consider here polygonal or polyhedral meshes corresponding to couples $\Mh\coloneq(\Th,\Fh)$, where $\Th$ is a finite collection of polygonal elements such that $h\coloneq\max_{T\in\Th}h_T>0$ with $h_T$ denoting the diameter of $T$, while $\Fh$ is a finite collection of hyperplanar faces. It is assumed henceforth that the mesh $\Mh$ matches the geometrical requirements detailed in \cite[Definition 7.2]{Droniou.Eymard.ea:17}; see also~\cite[Section 2]{Di-Pietro.Tittarelli:17}.
To avoid dealing with jumps of the permeability coefficient inside elements, we additionally assume that $\Mh$ is compliant with the partition $P_\Omega$ on which $\diff$ is piecewise constant meaning that, for every $T\in\Th$, there exists a unique subdomain $\omega\in P_\Omega$ such that $T\subset\omega$. 
For every mesh element $T\in\Th$, we denote by $\Fh[T]$ the subset of $\Fh$ containing the faces that lie on the boundary $\partial T$ of $T$.
For each face $F\in\Fh[T]$, $\normal_{TF}$ is the (constant) unit normal
vector to $F$ pointing out of $T$.
Boundary faces lying on $\partial\Omega$ and internal faces contained in
$\Omega$ are collected in the sets $\Fhb$ and $\Fhi$, respectively.

Our focus is on the so-called $h$-convergence analysis, so we consider a sequence of refined meshes that is regular in the sense of~\cite[Definition~3]{Di-Pietro.Tittarelli:17}.
The mesh regularity assumption implies, in particular, that the diameter $h_T$ of a mesh element $T\in\Th$ is uniformly comparable to the diameter $h_F$ of each face $F\in\Fh[T]$, and that the number of faces in $\Fh[T]$ is bounded above by an integer $N_\partial$ independent of $h$.
We additionally assume that, for each mesh in the sequence, all the elements are star-shaped with respect to every point of a ball of radius uniformly comparable to the diameter of the element.
This assumption is required to use the results of \cite[Appendix A]{Botti.Di-Pietro.ea:18}.

\subsection{Time mesh}\label{sec:time_mesh}

We subdivide $(0,\tF)$ into $N\in\Natural^*$ uniform subintervals, and introduce the timestep $\tau\coloneq\tF/N$ and the 
discrete times $t^n\coloneq n\tau$ for all $0\le n\le N$. 
We define the space of piecewise $H^1$ functions on $(0,\tF)$ by
$$
H^1(\mathcal{T}_{\tau})\coloneq 
\left\{ \varphi\in L^2((0,\tF)) \st\varphi_{|(t^n,t^{n-1})}\in H^1((t^{n-1},t^n)) \,\text{ for all }\ 1\le n\le N\right\}.
$$
Since each $\psi\in H^1((t^{n-1},t^n))$ has an absolutely continuous representative in $[t^{n-1},t^n]$, we can identify 
$\varphi\in H^1(\mathcal{T}_{\tau})$ with a left continuous function in $(0,\tF)$.
Therefore, for any vector space $V$ and any $\varphi\in H^1(\mathcal{T}_{\tau};V)$, we set $\varphi^0\coloneq\varphi(0)$ and, for all $1\le n\le N$,
$$
\varphi^n\coloneq\lim_{t\to (t^n)^-}\varphi(t)\in V.
$$
If $\varphi\in C^{0}(V)$, this simply amounts to setting $\varphi^n\coloneq\varphi(t^n)$.
For all $n\ge 1$ and $\psi\in L^1(V)$, we define the time average of $\psi$ in $(t^{n-1}, t^n)$ as
\begin{equation}\label{eq:time.average}
  \overline\psi^n\coloneq \tau^{-1}\int_{t^{n-1}}^{t^n}\psi(t) {\rm d}t \in V,
\end{equation}
with the convention that $\overline\psi^0=0\in V$. 
We also let, for all $(\varphi^i)_{0\le i\le N}\in V^{N+1}$ and all $1\le n\le N$,
$$
\label{eq:ddt}
\ddt\varphi^n\coloneq\frac{\varphi^n-\varphi^{n-1}}{\tau}\in V
$$
denote the backward approximation of the first derivative of $\varphi$ at time $t^n$. 

We note a preliminary result that will be used in the convergence analysis of Section \ref{sec:convergence}. 
Let $\psi\in H^1(\mathcal{T}_{\tau})$ and $1\le n\le N$.
Identifying $\psi_{|(t^{n-1},t^n)}\in H^1((t^{n-1},t^n))$ with its absolutely continuous representative 
in $(t^{n-1},t^n]$, one can use the fundamental theorem of calculus to infer that
$$
\psi^n-\overline{\psi}^n = 
\psi(t^n) - \frac1\tau\int_{t^{n-1}}^{t^n}\left(
\psi(t^n)-\int_s^{t^n}\hspace{-2mm}\dt\psi(t) {\rm d}t
\right) {\rm d}s =
\frac1\tau\int_{t^{n-1}}^{t^n}\int_s^{t^n}\hspace{-2mm}\dt\psi(t) {\rm d}t~{\rm d}s
\le \int_{t^{n-1}}^{t^n} |\dt\psi(t)| {\rm d}t. 
$$
Thus, applying the previous result together with the Jensen inequality yields, 
for all $\varphi\in H^1(\mathcal{T}_{\tau}; L^2(\Omega))$,
\begin{equation}
  \label{eq:time_approx}
  \norm[\Omega]{\varphi^n-\overline{\varphi}^n}^2
  \le\int_\Omega\left(\int_{t^{n-1}}^{t^n}|\dt\varphi(\vec{x},t)| {\rm d}t\right)^2 {\rm d}\vec{x}
  \le\tau\int_{t^{n-1}}^{t^n}\norm[\Omega]{\dt\varphi(t)}^2 {\rm d}t
  \le\tau\norm[H^1((t^{n-1},t^n);L^2(\Omega))]{\varphi}^2,
\end{equation}
where $\varphi(\vec{x},t)$ is a shorthand notation for $(\varphi(t))(\vec{x})$. As a result of \eqref{eq:time_approx}, we get 
$$
\sum_{n=1}^N \tau \norm[\Omega]{\varphi^n-\overline{\varphi}^n}^2 \le 
\tau^2 \sum_{n=1}^N\norm[H^1((t^{n-1},t^n);L^2(\Omega))]{\varphi}^2 
\eqcolon \tau^2 \norm[H^1(\mathcal{T}_{\tau}; L^2(\Omega))]{\varphi}^2.
$$

\subsection{$L^2$-orthogonal projectors on local and broken polynomial spaces}

For $X\subset\overline{\Omega}$ and $k\in\Natural$, we denote by $P^{k}(X)$ the space spanned by the restriction to $X$ of scalar-valued, $d$-variate polynomials of total degree $k$.
The $L^2$-projector $\lproj[X]{k}:L^1(X)\to P^{k}(X)$ is defined such that, for all $v\in L^1(X)$,
\begin{equation}\label{eq:lproj}
  \int_X(\lproj[X]{k}v-v)w=0\qquad\forall w\in P^{k}(X).
\end{equation}
As a projector, $\lproj[X]{k}$ is linear and idempotent so that, for all $v\in P^k(X)$, $\lproj[X]{k}v=v$.
When dealing with the vector-valued polynomial space $\Pvec[k]{X}$ or with the tensor-valued polynomial space 
$\Pmat[k]{X}$, we use the boldface notation $\vlproj[X]{k}$ for the corresponding $L^2$-orthogonal 
projectors acting component-wise.
At the global level, we denote by $P^{k}(\Th)$, $\Pvec[k]{\Th}$, and $\Pmat[k]{\Th}$, respectively, the spaces of scalar-valued, vector-valued, and tensor-valued broken polynomial functions on $\Th$ of total degree $\le k$, and by $\lproj[h]{k}$ and $\vlproj[h]{k}$ the $L^2$-projectors on $P^{k}(\Th)$ and $\Pvec[k]{\Th}$, respectively.
The following optimal approximation properties for the $L^2$-projector $\lproj[X]{k}$ follow from \cite[Lemmas 3.4 and 3.6]{Di-Pietro.Droniou:15}:
There exists a strictly positive real number $C_{\rm ap}$ independent of $h$ such that, for all $T\in\Th$, all $l\in\{0,\ldots,k+1\}$, all $m\in\{0,\ldots,l\}$, and all $v\in H^l(T)$,
\begin{equation}
  \label{eq:approx.lproj}
  \seminorm[H^m(T)]{v - \lproj[T]{k} v }
  \le 
  C_{\rm ap} h_T^{l-m} \seminorm[H^l(T)]{v}
\end{equation}
and, if $l\ge 1$ and $m\le l-1$,
\begin{equation}
  \label{eq:approx.lproj:trace}
  h_T^{\frac12}\seminorm[{H^m(\Fh[T])}]{v - \lproj[T]{k} v }
  \le 
  C_{\rm ap} h_T^{l-m} \seminorm[H^l(T)]{v}
\end{equation}
where $\seminorm[{H^m(\Fh[T])}]{{\cdot}}$ is the broken Sobolev seminorm on $\Fh[T]$.

\subsection{Discrete spaces}\label{sec:discrete.spaces}

In this section we define the discrete spaces upon which the HHO method corresponding to a polynomial degree $k\ge 1$ is built.

\subsubsection{Displacement}

The discrete unknowns for the displacement are collected in the space
$$
\Uh\coloneq\left\{
\uvv[h]=\big((\vv[T])_{T\in\Th},(\vv[F])_{F\in\Fh}\big)\st
\vv[T]\in\Pvec[k]{T}\mbox{ for all } T\in\Th
\mbox{ and }\vv[F]\in\Pvec[k]{F}\mbox{ for all }F\in\Fh
\right\}.
$$
For any $\uvv[h]\in\Uh$, we denote by $\vv[h]\in\Pvec[k]{\Th}$ the broken polynomial vector field obtained patching element-based unknowns, so that
$$
(\vv[h])_{|T}=\vv[T]\qquad\forall T\in\Th.
$$
The discrete unknowns corresponding to a function $\vv\in\Hvec{\Omega}$ are obtained by means of the interpolator $\Ivh:\Hvec{\Omega}\to\Uh$ such that
\begin{equation}\label{eq:Ivh}
  \Ivh\vv \coloneq \big( (\vlproj[T]{k}\vv[|T])_{T\in\Th}, (\vlproj[F]{k}\vv[|F])_{F\in\Fh} \big).
 \end{equation}
For all $T\in\Th$, we denote by $\UT$ and $\IvT$ the restrictions to $T$ of $\Uh$ and $\Ivh$, respectively, and, for any $\uvv[h]\in\Uh$, we let $\uvv[T]\coloneq\big(\vv[T],(\vv[F])_{F\in\Fh[T]}\big)$ collect the local discrete unknowns attached to $T$.
At each time step, the displacement is sought in the following subspace of $\Uh$ that strongly accounts for the homogeneous Dirichlet condition~\eqref{eq:nl_biot.strong:bc.u}:
$$
\UhD\coloneq\left\{
\uvv[h]=\big((\vv[T])_{T\in\Th},(\vv[F])_{F\in\Fh}\big)\in\Uh\st
\vv[F]=\vec{0}\quad\forall F\in\Fhb\right\}.
$$
We next prove a discrete version of Korn's first inequality on $\UhD$ that will play a key role in the analysis.
To this purpose, we endow the space $\Uh$ with the discrete strain seminorm $\norm[\strain,h]{{\cdot}}$ defined, for all $\uvv[h]\in\Uh$, such that
\begin{equation}\label{eq:strain.seminorm}
  \norm[\strain,h]{\uvv[h]}\coloneq\left[
    \sum_{T\in\Th}\left(\norm[T]{\GRADs\vv[T]}^2
    + \sum_{F\in\Fh[T]}h_F^{-1}\norm[F]{\vv[F]-\vv[T]}^2\right)
    \right]^{\frac12}.
\end{equation}
We will also need the following continuous trace inequality, whose proof follows the arguments of \cite[Lemma 1.49]{Di-Pietro.Ern:12} (where a slightly different notion of mesh faces is considered):
There exists a strictly positive real number $C_{\rm tr}$, independent of $h$ but possibly depending on the mesh regularity parameter, such that, for all $T\in\Th$, all $\vv[T]\in\Hvec{T}$, and all $F\in\Fh[T]$,
\begin{equation}
  \label{eq:continuous_trace}
  \norm[F]{\vv[T]}^2\le 
  C_{\rm tr}^2 \left(\norm[T]{\GRAD\vv[T]}+h_T^{-1}\norm[T]{\vv[T]}\right)\norm[T]{\vv[T]}.
\end{equation}
\begin{proposition}[Discrete Korn's first inequality]
  There is a real number $C_{\rm K}>0$, only depending on $\Omega$, $d$, and the mesh regularity parameter, such that, 
  for all $\uvv[h]\in\UhD$,
  \begin{equation}
    \label{eq:Korn}
    \norm[\Omega]{\vv[h]}\le C_{\rm K}\norm[\strain,h]{\uvv[h]}.
  \end{equation}
\end{proposition}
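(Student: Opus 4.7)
The plan is to transplant the continuous Korn first inequality on $\HvecD{\Omega}$ to the hybrid discrete space via a duality argument. Using $\norm[\Omega]{\vv[h]}=\sup\{(\vv[h],\vec{\phi})_\Omega\st\vec{\phi}\in\Lvec{\Omega},\ \norm[\Omega]{\vec{\phi}}\le 1\}$, for each admissible $\vec{\phi}$ I would introduce the adjoint elliptic problem: find $\vec{z}\in\HvecD{\Omega}$ such that $(\GRADs\vec{z},\GRADs\vec{w})_\Omega=(\vec{\phi},\vec{w})_\Omega$ for every $\vec{w}\in\HvecD{\Omega}$. Existence, uniqueness, and the bound $\norm[\Omega]{\GRADs\vec{z}}\lesssim\norm[\Omega]{\vec{\phi}}$ follow from continuous Korn and Lax--Milgram, and standard elliptic regularity on Lipschitz polyhedra upgrades this to a fractional estimate $\vec{z}\in\textbf{H}^{1+s}(\Omega)$ for some $s\in(0,1/2]$ with a matching bound.

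The second step is elementwise integration by parts combined with the characteristic hybrid ``telescoping'' trick. Using $-\DIV\GRADs\vec{z}=\vec{\phi}$ in each element and the identity $\GRAD\vv[T]:\GRADs\vec{z}=\GRADs\vv[T]:\GRADs\vec{z}$ arising from symmetry of $\GRADs\vec{z}$, one finds
\[
(\vv[h],\vec{\phi})_\Omega
=\sum_{T\in\Th}\Bigl[(\GRADs\vv[T],\GRADs\vec{z})_T-\sum_{F\in\Fh[T]}(\vv[T],\GRADs\vec{z}\,\normal_{TF})_F\Bigr].
\]
In the face sum, $\vv[T]$ can be replaced by $\vv[T]-\vv[F]$ at no cost: on interior faces the single-valued $\vv[F]$ telescopes out via $\normal_{T_1F}=-\normal_{T_2F}$, and on boundary faces $\vv[F]=\vec{0}$ by the definition of $\UhD$. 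Cauchy--Schwarz in the format prescribed by~\eqref{eq:strain.seminorm} then controls the volume sum by $\norm[\strain,h]{\uvv[h]}\,\norm[\Omega]{\GRADs\vec{z}}$ and the face sum by $\norm[\strain,h]{\uvv[h]}\,\bigl(\sum_{T\in\Th}\sum_{F\in\Fh[T]}h_F\norm[F]{\GRADs\vec{z}}^2\bigr)^{1/2}$.

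The main obstacle is precisely this last quantity $\sum_{T,F}h_F\norm[F]{\GRADs\vec{z}}^2$: on a general Lipschitz polyhedron $\vec{z}\notin\textbf{H}^{2}(\Omega)$, so the textbook $H^2$-based face trace bound is unavailable. I would close the argument with a fractional analogue of the continuous trace inequality~\eqref{eq:continuous_trace} to reach $\sum_{T,F}h_F\norm[F]{\GRADs\vec{z}}^2\lesssim\norm[\Omega]{\GRADs\vec{z}}^2+h^{2s}\seminorm[H^s(\Th)]{\GRADs\vec{z}}^2\lesssim\norm[\textbf{H}^{1+s}(\Omega)]{\vec{z}}^2\lesssim\norm[\Omega]{\vec{\phi}}^2$, using mesh regularity to pass between the broken and the global fractional seminorm. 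Collecting the estimates and taking the supremum over $\vec{\phi}$ then yields~\eqref{eq:Korn} with a constant depending only on $\Omega$, $d$, and the mesh regularity parameter. The two core ingredients---the elementwise integration by parts and the hybrid face telescoping---are purely algebraic and geometry-blind, which is precisely what makes the proof go through on arbitrary polyhedral meshes without further geometric assumptions.
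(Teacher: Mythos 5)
Your overall architecture (duality/lifting, elementwise integration by parts, telescoping of the single-valued face unknowns, Cauchy--Schwarz against the seminorm \eqref{eq:strain.seminorm}) is the right skeleton, but the closing step has a genuine gap. You solve the adjoint problem $-\DIV\GRADs\vec{z}=\vec{\phi}$, invoke regularity $\vec{z}\in\textbf{H}^{1+s}(\Omega)$ with $s\in(0,\tfrac12]$, and then want a fractional trace bound of the form $\sum_{T\in\Th}\sum_{F\in\Fh[T]}h_F\norm[F]{\GRADs\vec{z}}^2\lesssim\norm[\Omega]{\GRADs\vec{z}}^2+h^{2s}\seminorm[H^s(\Th)]{\GRADs\vec{z}}^2$. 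No such inequality holds in that range: the trace map $H^s(T)\to L^2(\partial T)$ is unbounded (indeed undefined on $L^2(\partial T)$) for $s\le\tfrac12$, and the multiplicative trace estimate you are imitating requires $s>\tfrac12$. At the regularity you allow, even the face integrals $(\vv[T]-\vv[F],\GRADs\vec{z}\,\normal_{TF})_F$ are not well-defined as $L^2(F)$ pairings, since $\GRADs\vec{z}$ only has $H^{-1/2}$-type normal traces through its $H(\opdiv)$ structure, while $\vv[T]-\vv[F]$ is merely $L^2(F)$. Repairing this by demanding $s>\tfrac12$ (e.g.\ convexity of $\Omega$, or corner-angle conditions for the Lam\'e-type operator) would make the constant hinge on elliptic regularity of the domain, which is exactly the kind of assumption the statement --- and the paper's proof --- avoids.

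The paper sidesteps the issue by lifting $\vv[h]$ through the surjectivity of the divergence operator $\DIV:\Hmats{\Omega}\to\Lvec{\Omega}$: one picks a \emph{symmetric, $H^1$} tensor field $\matr{\tau}_{\vv[h]}$ with $\DIV\matr{\tau}_{\vv[h]}=\vv[h]$ and $\norm[{1,\Omega}]{\matr{\tau}_{\vv[h]}}\le C_{\rm sj}\norm[\Omega]{\vv[h]}$, rather than solving a boundary value problem whose regularity is at the mercy of the geometry. The same elementwise integration by parts, symmetry trick \eqref{eq:magic}, and telescoping then apply verbatim, and since the lifted field is genuinely $H^1$ the continuous trace inequality \eqref{eq:continuous_trace} controls the face terms with constants depending only on $\Omega$, $d$, and mesh regularity. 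If you want to keep a duality flavour, replace your adjoint PDE solve by this algebraic lifting (with $\vec{\phi}=\vv[h]$); that is precisely the ``simple proof not requiring particular geometrical assumptions'' the paper advertises.
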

\begin{remark}[Strain norm]
  An immediate consequence of \eqref{eq:Korn} is that the map $\norm[\strain,h]{{\cdot}}$ is a norm in $\UhD$.
\end{remark}
\begin{proof}
  We start by noticing that it holds, for all $\matr{\alpha}\in\Real^{d\times d}_\symm$ and all $\matr{\beta}\in\Real^{d\times d}$, denoting by $\matr{\beta}_\symm\coloneq\frac12(\matr{\beta}+\matr{\beta}\trans)$ the symmetric part of $\matr{\beta}$,
  \begin{equation}\label{eq:magic}
    \matr{\alpha}:\matr{\beta}
    =\matr{\alpha}:\matr{\beta}_\symm.
  \end{equation}
  Let $\uvv[h]\in\UhD$. Since the divergence operator $\DIV:\Hmats{\Omega}\to\Lvec{\Omega}$ is onto 
  (c.f.~\cite[Section 9.1.1]{Boffi.Brezzi.ea:13} and \cite[Theorem 3.2]{Amrouche.al:06}), there exists $\matr{\tau}_{\vv[h]}\in\Hmats{\Omega}$ such that 
  $\DIV\matr{\tau}_{\vv[h]}=\vv[h]$ and $\norm[{1,\Omega}]{\matr{\tau}_{\vv[h]}}\le C_{\rm{sj}}\norm[\Omega]{\vv[h]}$, 
  with $C_{\rm{sj}}>0$ independent of $h$. 
  It follows that
  $$
  \begin{aligned}
    \norm[\Omega]{\vv[h]}^2
    &=\int_\Omega\vv[h]\cdot\left(\DIV\matr{\tau}_{\vv[h]}\right)
    \\
    &=\sum_{T\in\Th}\left(-\int_T\GRAD\vv[T]:\matr{\tau}_{\vv[h]}
      +\sum_{F\in\Fh[T]}\int_F (\vv[T]-\vv[F])\cdot(\matr{\tau}_{\vv[h]}\normal_{TF})\right)
    \\
    &=\sum_{T\in\Th}\left(-\int_T\GRADs\vv[T]:\matr{\tau}_{\vv[h]}
    +\sum_{F\in\Fh[T]}\int_F (\vv[T]-\vv[F])\cdot(\matr{\tau}_{\vv[h]}\normal_{TF})\right),
  \end{aligned}
  $$
  where, to pass to the second line, we have integrated by parts element by element and used the fact 
  that $\matr{\tau}_{\vv[h]}$ has continuous normal traces across interfaces and that boundary unknowns are set to zero 
  in order to insert $\vv[F]$ into the boundary term, while, to pass to the third line, we have used \eqref{eq:magic} 
  with $\matr{\alpha}=\matr{\tau}_{\vv[h]}$ and $\matr{\beta}=\GRAD\vv[T]$. Applying a Cauchy--Schwarz inequality on 
  the integrals over the element, a generalized H\"older inequality with 
  exponents $(2,2,+\infty)$ on the integrals over faces, using the fact that 
  $\norm[{\Lvec[\infty]{F}}]{\normal_{TF}}\le 1$, and invoking a discrete Cauchy--Schwarz inequality on the sum over 
  $T\in\Th$, we infer that
  $$
  \begin{aligned}
    \norm[\Omega]{\vv[h]}^2
    &\le\sum_{T\in\Th}\left(
    \norm[T]{\GRADs\vv[T]}\norm[T]{\matr{\tau}_{\vv[h]}}
    +\sum_{F\in\Fh[T]}h_F^{-\frac12}\norm[F]{\vv[F]-\vv[T]}~h_F^{\frac12}\norm[F]{\matr{\tau}_{\vv[h]}}
    \right)
    \\
    &\le 
    \left(\sum_{T\in\Th}\norm[T]{\GRADs\vv[T]}^2\right)^{\frac12}
    \norm[\Omega]{\matr{\tau}_{\vv[h]}}
    +\left(\sum_{T\in\Th}\sum_{F\in\Fh[T]}h_F^{-1}\norm[F]{\vv[F]-\vv[T]}^2\right)^{\frac12}
    \left(\sum_{T\in\Th}\sum_{F\in\Fh[T]}h_F\norm[F]{\matr{\tau}_{\vv[h]}}^2\right)^{\frac12}
    \\
    &\le\sqrt{2 N_\partial} C_{\rm tr}
    \left(\sum_{T\in\Th}\norm[T]{\GRADs\vv}^2
    +\sum_{T\in\Th}\sum_{F\in\Fh[T]}h_F^{-1}\norm[F]{\vv[F]-\vv[T]}^2\right)^{\frac12}
    \norm[{1,\Omega}]{\matr{\tau}_{\vv[h]}}
    \\
    &=\sqrt{2N_\partial} C_{\rm tr}\norm[\strain,h]{\uvv[h]}\norm[{1,\Omega}]{\matr{\tau}_{\vv[h]}},
  \end{aligned}
  $$
  where, to pass to the third line, we have estimated $\norm[F]{\matr{\tau}_{\vv[h]}}$ using the continuous trace inequality \eqref{eq:continuous_trace} and used the fact that $h_F\le h_T\le {\rm diam}(\Omega)=1$ for any $T\in\Th$ and $F\in\Fh[T]$.
  Thus, invoking the boundedness of the divergence operator, we get
  $$
  \norm[\Omega]{\vv[h]}^2
  \le \sqrt{2N_\partial} C_{\rm sj}C_{\rm tr}\norm[\strain,h]{\uvv[h]}\norm[\Omega]{\vv[h]},
  $$
  which yields the conclusion with $C_{\rm K}= \sqrt{2N_\partial} C_{\rm sj}C_{\rm tr}$.
\end{proof}

\subsubsection{Pore pressure}

At each time step, the discrete pore pressure is sought in the space
$$
  \Ph\coloneq
    \begin{cases}
      P^k(\Th) & \text{if $C_0>0$},
      \\
      P^k_0(\Th)\coloneq\left\{q_h\in P^k(\Th)\st\int_\Omega q_h=0\right\} & \text{if $C_0=0$.}
    \end{cases}
$$
For any internal face $F\in\Fhi$, we denote by $T_{F,1}, T_{F,2}\in\Th$ the two mesh elements that share $F$, that is to say $F\subset \partial T_{F,1}\cap \partial T_{F,2}$ and $T_{F,1}\neq T_{F,2}$ (the ordering of the elements is arbitrary but fixed), and we set
\begin{equation}
  \label{eq:diff_coeff}
  \sdiff[F,i]\coloneq\left({\diff}_{|T_{F,i}}\normal_{T_{F,i}F}\right)\cdot\normal_{T_{F,i}F}\quad\text{for } i\in\{1,2\},
  \qquad\sdiff\coloneq \frac{2\sdiff[F,1]\sdiff[F,2]}{\sdiff[F,1]+\sdiff[F,2]}.
\end{equation}
For all $q_h\in\Ph$, we denote by $q_T$ the restriction of $q_h$ to an element $T\in\Th$ and we define the discrete seminorm 
\begin{equation}\label{eq:p.seminorm}
  \norm[\diff,h]{q_h}\coloneq\left(
    \sum_{T\in\Th}\norm[T]{\diff^{\frac12}\GRAD q_T}^2
    + \sum_{F\in\Fhi}\frac{\sdiff}{h_F} \norm[F]{q_{T_{F,1}} - q_{T_{F,2}}}^2
    \right)^{\frac12}.
\end{equation}
The fact that in \eqref{eq:p.seminorm} boundary terms only appear on internal faces reflects the homogeneous Neumann boundary condition \eqref{eq:nl_biot.strong:bc.p}. 

Using the surjectivity of the divergence operator $\DIV:\HvecD{\Omega}\to L^2_0(\Omega)$ and proceeding as in the proof of the discrete Korn inequality \eqref{eq:Korn}, a discrete Poincaré--Wirtinger inequality in $P^k(\Th)$ is readily inferred, namely one has the existence of $C_{\rm P}>0$, only depending on $\Omega$, $d$, and the mesh regularity parameter such that, for all 
$q_h\in P^k(\Th)$,
$$
  \norm[\Omega]{q_h-\lproj[\Omega]{0}q_h}\le C_{\rm P}\ldiff^{-\frac12}\norm[\diff,h]{q_h}.
$$
This result ensures, in particular, that the seminorm $\norm[\diff,h]{{\cdot}}$ defined in \eqref{eq:p.seminorm} is a norm on $P^k_0(\Th)$.
For a proof of more general Sobolev inequalities on broken polynomial spaces, we refer the reader to \cite{Di-Pietro.Ern:10} and \cite[Section 5.1.2]{Di-Pietro.Ern:12}.

%-------------------------------------------------------------------------------------------------------------
\section{Discretization}\label{sec:discretization}

In this section we define the discrete counterparts of the elasticity, hydro-mechanical coupling, and Darcy operators, and formulate the HHO--dG scheme for problem \eqref{eq:weak_form}.

\subsection{Nonlinear elasticity operator}

The discretization of the nonlinear elasticity operator closely follows~\cite{Botti.Di-Pietro.Sochala:17}.
We define the local symmetric gradient reconstruction $\GTs:\UT\to\Pmats[k]{T}$ such that, for a given $\uvv[T]=\big(\vv[T], (\vv[F])_{F\in\Fh[T]}\big)\in\UT$, $\GTs\uvv[T]\in\Pmats[k]{T}$ solves
\begin{equation}
  \label{eq:GTs}
  \int_T\GTs\uvv[T]:\matr{\tau}
  = -\int_T\vv[T]\cdot(\DIV\matr{\tau})
  + \sum_{F\in\Fh[T]}\int_F\vv[F]\cdot(\matr{\tau}\normal_{TF})\qquad
   \forall\matr{\tau}\in\Pmats[k]{T}.
\end{equation}
Existence and uniqueness of $\GTs\uvv[T]$ follow from the Riesz representation theorem in $\Pmats[k]{T}$ for the $L^2(T)^{d\times d}$-inner product.
This definition is motivated by the following property.
\begin{proposition}[Commuting property for the local symmetric gradient reconstruction]
  For all $\vv\in\Hvec{T}$, it holds that
  \begin{equation}\label{eq:GTs.commuting}
    \GTs\IvT\vv = \vlproj[T]{k}(\GRADs\vv).
  \end{equation}  
\end{proposition}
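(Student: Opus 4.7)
The proof is essentially a computation that exploits two ingredients: the defining $L^2$-orthogonality of the projectors involved in $\IvT$, and an integration by parts formula going the other direction.

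My plan is to test the definition~\eqref{eq:GTs} of $\GTs\IvT\vv$ against an arbitrary $\matr{\tau}\in\Pmats[k]{T}$ and show that the resulting right-hand side equals $\int_T\vlproj[T]{k}(\GRADs\vv):\matr{\tau}$. Since both $\GTs\IvT\vv$ and $\vlproj[T]{k}(\GRADs\vv)$ lie in $\Pmats[k]{T}$ (symmetry of $\GRADs\vv$ is preserved by the component-wise $L^2$-projection), the equality of their integrals against every $\matr{\tau}\in\Pmats[k]{T}$ yields the conclusion.

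First I would substitute $\IvT\vv=\big(\vlproj[T]{k}\vv,(\vlproj[F]{k}\vv[|F])_{F\in\Fh[T]}\big)$ into \eqref{eq:GTs}, getting
\begin{equation*}
\int_T\GTs\IvT\vv:\matr{\tau}
= -\int_T\vlproj[T]{k}\vv\cdot(\DIV\matr{\tau})
+ \sum_{F\in\Fh[T]}\int_F\vlproj[F]{k}\vv[|F]\cdot(\matr{\tau}\normal_{TF}).
\end{equation*}
The key observation is a degree count: since $\matr{\tau}\in\Pmats[k]{T}$, we have $\DIV\matr{\tau}\in\Pvec[k-1]{T}\subset\Pvec[k]{T}$ and, for each $F\in\Fh[T]$, $\matr{\tau}\normal_{TF}\in\Pvec[k]{F}$. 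The defining property~\eqref{eq:lproj} of the $L^2$-projectors (applied component-wise) therefore lets me replace $\vlproj[T]{k}\vv$ by $\vv$ and $\vlproj[F]{k}\vv[|F]$ by $\vv[|F]$ in the above identity without changing its value.

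Next, I would integrate by parts in the reverse direction on each element: for $\vv\in\Hvec{T}$ and $\matr{\tau}\in\Pmats[k]{T}$,
\begin{equation*}
-\int_T\vv\cdot(\DIV\matr{\tau})+\sum_{F\in\Fh[T]}\int_F\vv[|F]\cdot(\matr{\tau}\normal_{TF})
= \int_T\GRAD\vv:\matr{\tau}
= \int_T\GRADs\vv:\matr{\tau},
\end{equation*}
where the last equality uses the symmetry of $\matr{\tau}$, as in~\eqref{eq:magic}. Finally, invoking the definition of $\vlproj[T]{k}$ once more (with the symmetric test function $\matr{\tau}$) yields $\int_T\GRADs\vv:\matr{\tau}=\int_T\vlproj[T]{k}(\GRADs\vv):\matr{\tau}$, and the two sides of~\eqref{eq:GTs.commuting}, both elements of $\Pmats[k]{T}$, coincide.

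I do not anticipate a real obstacle: the argument is a short algebraic manipulation driven by the polynomial degrees matching the degrees of the projections. The only point that requires a moment of care is verifying the degree bookkeeping $\DIV\matr{\tau}\in\Pvec[k-1]{T}$ and $\matr{\tau}\normal_{TF}\in\Pvec[k]{F}$, which is precisely what makes the definition~\eqref{eq:GTs} consistent with the choice of interpolator~\eqref{eq:Ivh}.
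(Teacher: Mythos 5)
Your proof is correct and follows essentially the same route as the paper's: expand \eqref{eq:GTs} with $\uvv[T]=\IvT\vv$, remove the $L^2$-projectors via the degree counts $\DIV\matr{\tau}\in\Pvec[k-1]{T}$ and $\matr{\tau}\normal_{TF}\in\Pvec[k]{F}$, integrate by parts, and invoke symmetry of the test tensor. The only (minor) simplification is that you test directly against $\matr{\tau}\in\Pmats[k]{T}$ from the outset, whereas the paper tests against general $\matr{\tau}\in\Pmat{T}$ and passes to the symmetric part via \eqref{eq:magic}; both are valid, and your shortcut is justified by the remark that $\vlproj[T]{k}(\GRADs\vv)$ is itself symmetric.
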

\begin{remark}[Approximation properties of the local symmetric gradient reconstruction]
  The commuting property \eqref{eq:GTs.commuting} combined with \eqref{eq:approx.lproj} shows that $\GTs\IvT\vv$ optimally approximates $\GRADs\vv$ in $\Pmats[k]{T}$.
\end{remark}
\begin{proof}
  For all $\matr{\tau}\in\Pmat{T}$, we can write
  $$
    \begin{aligned}
      \int_T\GTs\IvT\vv:\matr{\tau} &= \int_T\GTs\IvT\vv:\matr{\tau}_\symm
      \\
      &= -\int_T\vlproj[T]{k}\vv\cdot(\DIV\matr{\tau}_\symm)
      +\sum_{F\in\Fh[T]}\int_F\vlproj[F]{k}\vv\cdot(\matr{\tau}_\symm\normal_{TF}) 
      \\
      &= -\int_T\vv\cdot(\DIV\matr{\tau}_\symm) + \sum_{F\in\Fh[T]}\int_F\vv\cdot(\matr{\tau}_\symm\normal_{TF})
      \\
      &=\int_T \GRAD\vv:\matr{\tau}_\symm
      =\int_T \GRADs\vv:\matr{\tau}_\symm
      =\int_T \GRADs\vv:\matr{\tau}
      =\int_T\vlproj[T]{k}(\GRADs\vv):\matr{\tau},
    \end{aligned}
  $$
  where we have used \eqref{eq:magic} with $\matr{\alpha}=\GTs\IvT\vv$ and $\matr{\beta}=\matr{\tau}$ in the first line,
  the definition \eqref{eq:GTs} of the local symmetric gradient with $\uvv[T]=\IvT\vv$ in the second line, and
  definition \eqref{eq:lproj} after observing that $\DIV\matr{\tau}_\symm\in\Pvec[k-1]{T}\subset\Pvec{T}$ and 
  $\matr{\tau}_\symm\normal_{TF}\in\Pvec{F}$ for all $F\in\Fh[T]$ to remove the $L^2$-orthogonal projectors in the third 
  line. In the fourth line, we have used an integration by parts, then invoked \eqref{eq:magic} first with 
  $\matr{\alpha}=\matr{\tau}_\symm$ and $\matr{\beta}=\GRAD\vv$, then with $\matr{\alpha}=\GRADs\vv$ and 
  $\matr{\beta}=\matr{\tau}$, and we have used the definition \eqref{eq:lproj} of $\vlproj[T]{k}$ to conclude.
\end{proof}
From $\GTs$, we define the local displacement reconstruction operator 
$\rT:\UT\to\Pvec[k+1]{T}$ such that, for all $\uvv[T]\in\UT$,
\begin{align*}
  \int_T(\GRADs\rT\uvv[T]-\GTs\uvv[T]):\GRADs\vw &= 0 \qquad\forall\vw\in\Pvec[k+1]{T},
  \\
  \int_T\rT\uvv[T] &= \int_T\vv[T],\qquad
  %% \int_T\CURL(\rT\uvv[T])
  %% &=\sum_{F\in\Fh[T]}\int_{F}\normal_{TF}\wedge\vv[F],
\end{align*}
and, denoting by $\partial_i$ the partial derivative with respect to the $i$th space variable, if $d=2$,
$$
\int_T\left(
\partial_1 r_{T,2}^{k+1}\uvv[T] - \partial_2 r_{T,1}^{k+1}\uvv[T]
\right)
=\sum_{F\in\Fh[T]}\int_F\left(
n_{TF,1} v_{F,2} - v_1 n_{TF,2}
\right),
$$
while, if $d=3$,
$$
\int_T\begin{pmatrix}
  \partial_2 r_{T,3}^{k+1}\uvv[T] - \partial_3 r_{T,2}^{k+1}\uvv[T] \\
  \partial_3 r_{T,1}^{k+1}\uvv[T] - \partial_1 r_{T,3}^{k+1}\uvv[T] \\
  \partial_1 r_{T,2}^{k+1}\uvv[T] - \partial_2 r_{T,1}^{k+1}\uvv[T]
\end{pmatrix}
= \sum_{F\in\Fh[T]}\int_F\begin{pmatrix}
n_{TF,2} v_{F,3} - n_{TF,3} v_{F,2} \\
n_{TF,3} v_{F,1} - n_{TF,1} v_{F,3} \\
n_{TF,1} v_{F,2} - n_{TF,2} v_{F,1}
\end{pmatrix}.
$$
Optimal approximation properties for $\rT\IvT$ have been recently proved in \cite[Appendix A]{Botti.Di-Pietro.ea:18} generalizing the ones of \cite[Lemma 2]{Di-Pietro.Ern:15}. The optimal approximation properties of $\rT\IvT$ are required to infer \eqref{eq:ah.consist} below.

The discretization of the nonlinear elasticity operator is realized by the function $a_h:\Uh\times\Uh\to\Real$ such 
that, for all $\uvw[h],\uvv[h]\in\Uh$,
\begin{equation}
  \label{eq:ah}  
  a_h(\uvw[h],\uvv[h])\coloneq\sum_{T\in\Th}\left(
  \int_T{\ms(\cdot,\GTs\uvu[T]):\GTs\uvv[T]} +
  \sum_{F\in\Fh[T]}\frac{\gamma}{h_F}\int_F\vec{\varDelta}_{TF}^k\uvu[T]\cdot\vec{\varDelta}_{TF}^k\uvv[T]\right),
\end{equation}
where $\gamma>0$ denotes a user-dependent parameter and we penalize in a least-square sense the face-based residual $\vec{\varDelta}_{TF}^k:\UT\to\Pvec[k]{F}$ such that, 
for all $T\in\Th$, all $\uvv[T]\in\UT$, and all $F\in\Fh[T]$, 
$$
\vec{\varDelta}_{TF}^k\uvv[T]\coloneq\vlproj[F]{k}(\rT\uvv[T]-\vv[F]) - \vlproj[T]{k}(\rT\uvv[T]-\vv[T]).
$$
This definition ensures that $\vec{\varDelta}_{TF}^k$ vanishes whenever its argument is of the form $\IvT\vw$ with 
$\vw\in\Pvec[k+1]{T}$, a crucial property to obtain high-order error estimates 
(cf.~\cite[Theorem 12]{Boffi.Botti.Di-Pietro:16}). 
For further use, we note the following seminorm equivalence, which can be proved using the arguments 
of~\cite[Lemma~4]{Di-Pietro.Ern:15}: For all $\uvv[h]\in\Uh$,
\begin{equation}
  \label{eq:norm_equivalence}
  C_{\rm{eq}}^{-2}\norm[\strain,h]{\uvv[h]}^2\le
  \sum_{T\in\Th}\left(\norm[T]{\GTs\uvv[T]}^2
  +\sum_{F\in\Fh[T]}h_F^{-1}\norm[F]{\vec{\varDelta}_{TF}^k\uvv[T]}^2\right)
  \le C_{\rm{eq}}^2\norm[\strain,h]{\uvv[h]}^2,
\end{equation}
where $C_{\rm{eq}}>0$ is independent of $h$,
and the discrete strain seminorm $\norm[\strain,h]{{\cdot}}$ is defined by \eqref{eq:strain.seminorm}.
By~\eqref{eq:hypo.coercivity}, this implies the coercivity of $a_h$. 

\begin{remark}[Choice of the stabilization parameter]\label{rem:gamma}
  The constants $C_{\rm gr},C_{\rm cv}$ appearing in~\eqref{eq:hypo} satisfy $C_{\rm cv}^2\le C_{\rm gr}$. Indeed, owing   
  to~\eqref{eq:hypo.coercivity}, the Cauchy--Schwarz inequality, and~\eqref{eq:hypo.growth}, it holds 
  for all $\matr{\tau}\in\Real^{d\times d}_\symm$,
  \begin{equation}
    \label{eq:const_rel1}
    C_{\rm cv}^2\seminorm[d\times d]{\matr{\tau}}^2
    \le \ms(\vec{x},\matr{\tau}):\matr{\tau}
    \le \seminorm[d\times d]{\ms(\vec{x},\matr{\tau})}\seminorm[d\times d]{\matr{\tau}}
    \le C_{\rm gr} \seminorm[d\times d]{\matr{\tau}}^2.
  \end{equation}
  Thus, we choose the stabilization parameter $\gamma$ in \eqref{eq:ah} such that 
  \begin{equation}
    \label{eq:penalty_mech}
    \gamma\in [C_{\rm cv}^2,C_{\rm gr}].
  \end{equation}
  For the linear elasticity model \eqref{eq:LinCauchy}, we have $C_{\rm gr}=2\mu+d\lambda$ and 
  $C_{\rm cv}=\sqrt{2\mu}$, so that a natural choice for the stabilization parameter is $\gamma = 2\mu$.
\end{remark} 

\subsection{Hydro-mechanical coupling}
The hydro-mechanical coupling is realized by means of the bilinear form $b_h$ on $\Uh\times P^{k}(\Th)$ such that, 
for all $\uvv[h]\in\Uh$ and all $q_h\in P^{k}(\Th)$,
\begin{equation}
  \label{eq:bh}
  b_h(\uvv[h],q_h)
  \coloneq
  \sum_{T\in\Th}\left(
  \int_T\vv[T]\cdot\GRAD q_T - \sum_{F\in\Fh[T]}\int_F(\vv[F]\cdot\normal_{TF})~q_T
  \right),
\end{equation}
where $q_T\coloneq q_{h|T}$ for all $T\in\Th$.
It can be checked using Cauchy--Schwarz inequalities together with the definition \eqref{eq:strain.seminorm} of the strain seminorm and discrete trace inequalities that there exists $C_{{\rm bd}}>0$ independent of $h$ such that 
$$
b_h(\uvv[h],q_h)\le C_{\rm{bd}}\norm[\strain,h]{\uvv[h]}\norm[\Omega]{q_h}.
$$
Additionally, using the strongly enforced boundary condition in $\UhD$, it can be proved that
\begin{equation}\label{eq:bh(vh,1)=0}
  b_{h}(\uvv[h],1)=0,\qquad\forall\uvv[h]\in\UhD.
\end{equation}
Finally, we note the following lemma stating that the hybrid interpolator $\Ivh:\Hvec{\Omega}\to\Uh$ is a Fortin operator.
\begin{lemma}[Fortin operator]
For all $\vv\in\Hvec{\Omega}$ and all $q_h\in P^{k}(\Th)$, the interpolator $\Ivh$ satisfies
\begin{subequations}
\begin{align}
  \label{eq:fortin.st}
  \norm[\strain,h]{\Ivh\vv}&\le C_{\rm st}\seminorm[1,\Omega]{\vv},
  \\
  \label{eq:fortin.com}
  b_h(\Ivh\vv,q_h) &= b(\vv,q_h),
\end{align}
\end{subequations}
where the strictly positive real number $C_{\rm st}$ is independent of $h$.
\end{lemma}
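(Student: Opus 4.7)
The two claims are essentially independent, so I would handle them separately, starting with the commutation identity \eqref{eq:fortin.com}, which is the more mechanical of the two.

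For \eqref{eq:fortin.com}, the plan is to exploit the polynomial consistency of the $L^2$-orthogonal projectors appearing in the definition \eqref{eq:Ivh} of $\Ivh$, followed by element-wise integration by parts. Specifically, in
$$
b_h(\Ivh\vv,q_h)=\sum_{T\in\Th}\left(\int_T\vlproj[T]{k}\vv\cdot\GRAD q_T-\sum_{F\in\Fh[T]}\int_F(\vlproj[F]{k}\vv\cdot\normal_{TF})\,q_T\right),
$$
I would observe that $\GRAD q_T\in\Pvec[k-1]{T}\subset\Pvec[k]{T}$ and, for each $F\in\Fh[T]$, $q_T{}_{|F}\normal_{TF}\in\Pvec[k]{F}$, so by definition \eqref{eq:lproj} of the projectors the two occurrences of $\vlproj[T]{k}\vv$ and $\vlproj[F]{k}\vv$ can be replaced by $\vv$. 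An element-wise integration by parts (legitimate since $\vv\in\Hvec{\Omega}$) then collapses each element contribution to $-\int_T(\DIV\vv)\,q_T$, and summing yields $-(\DIV\vv,q_h)_\Omega=b(\vv,q_h)$.

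For \eqref{eq:fortin.st}, I would control the two contributions to the strain seminorm \eqref{eq:strain.seminorm} separately, using mesh regularity (in particular $h_F\simeq h_T$ and the uniform bound $N_\partial$ on $\operatorname{card}\Fh[T]$) to handle sums over faces. For the volume term I would simply write
$$
\norm[T]{\GRADs\vlproj[T]{k}\vv}\le\norm[T]{\GRADs\vv}+\norm[T]{\GRADs(\vv-\vlproj[T]{k}\vv)}
$$
and apply the approximation estimate \eqref{eq:approx.lproj} with $l=m=1$ (i.e., the $H^1$-stability of $\vlproj[T]{k}$) to bound the second term by $C\seminorm[1,T]{\vv}$. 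For the face term I would use that $\vlproj[F]{k}$ is a projector to write $\vlproj[F]{k}\vv-\vlproj[T]{k}\vv=\vlproj[F]{k}(\vv-\vlproj[T]{k}\vv)$, then invoke $L^2(F)$-boundedness of $\vlproj[F]{k}$ and the trace approximation bound \eqref{eq:approx.lproj:trace} with $l=1$, $m=0$ to obtain $h_F^{-1/2}\norm[F]{\vlproj[F]{k}\vv-\vlproj[T]{k}\vv}\le C\seminorm[1,T]{\vv}$. Squaring, summing over $F\in\Fh[T]$ (using $N_\partial$), and then over $T\in\Th$ yields \eqref{eq:fortin.st} with a constant $C_{\rm st}$ depending only on the mesh regularity parameter and on $C_{\rm ap}$.

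The main technical subtlety is the $H^1$-stability of the $L^2$-projection used in the volume term: on general polytopal elements this is nontrivial, but here it is available as a direct consequence of the approximation estimate \eqref{eq:approx.lproj} under the star-shapedness assumption made in the discrete setting. Apart from that, both parts are essentially bookkeeping around the basic projection identities and element-wise integration by parts.
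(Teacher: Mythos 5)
Your proposal is correct and follows essentially the same route as the paper: for \eqref{eq:fortin.com}, polynomial consistency of $\vlproj[T]{k}$ and $\vlproj[F]{k}$ (since $\GRAD q_T\in\Pvec[k-1]{T}\subset\Pvec[k]{T}$ and $q_{T|F}\normal_{TF}\in\Pvec[k]{F}$) followed by element-wise integration by parts, and for \eqref{eq:fortin.st}, a triangle inequality plus \eqref{eq:approx.lproj} with $l=m=1$ for the volume term and the identity $\vlproj[F]{k}\vv-\vlproj[T]{k}\vv=\vlproj[F]{k}(\vv-\vlproj[T]{k}\vv)$ with $L^2(F)$-boundedness and \eqref{eq:approx.lproj:trace} ($l=1$, $m=0$) for the face term. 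No gaps to report.
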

\begin{proof}
  (i) \emph{Proof of \eqref{eq:fortin.st}.}
  Recalling the definitions \eqref{eq:strain.seminorm} of the discrete strain seminorm and \eqref{eq:Ivh} of the global interpolator, we can write
  $$
  \begin{aligned}
    \norm[\strain,h]{\Ivh\vv}^2
    &= \sum_{T\in\Th}\left(
    \norm[T]{\GRADs\vlproj[T]{k}\vv}^2
    + \sum_{F\in\Fh[T]}h_F^{-1}\norm[F]{\vlproj[F]{k}\vv - \vlproj[T]{k}\vv}^2
    \right)
    \\
    &\le \sum_{T\in\Th}\left(
    2\norm[T]{\GRADs(\vlproj[T]{k}\vv-\vv)}^2
    + 2\norm[T]{\GRADs\vv}^2
    + \sum_{F\in\Fh[T]}h_F^{-1}\norm[F]{\vv - \vlproj[T]{k}\vv}^2
    \right)
    \le C_{\rm st}\seminorm[1,\Omega]{\vv}^2,
  \end{aligned}
  $$
  with $C_{\rm st}>0$ independent of $h$. To pass to the second line, we have used a triangle inequality after 
  inserting $\pm\GRADs\vv$ into the first term, and we have used the linearity, idempotency, and boundedness of 
  $\vlproj[F]{k}$ to write $\norm[F]{\vlproj[F]{k}\vv-\vlproj[T]{k}\vv}=\norm[F]{\vlproj[F]{k}(\vv-\vlproj[T]{k}\vv)}\le\norm[F]{\vv-\vlproj[T]{k}\vv}$.
  To conclude, we have used \eqref{eq:approx.lproj} with $l=m=1$ and \eqref{eq:approx.lproj:trace} with $l=1$ and $m=0$ to 
  bound the first and third term inside the summation.
  \medskip\\
  (ii) \emph{Proof of \eqref{eq:fortin.com}.}
  Recalling the definitions \eqref{eq:bh} of $b_h(\cdot,\cdot)$ and \eqref{eq:Ivh} of the global interpolator, 
  we can write letting, for the sake of brevity, $q_T\coloneq q_{h|T}$ for all $T\in\Th$,
  for all $q_h\in P^k(\Th)$,
  $$
  \begin{aligned}
    b_h(\Ivh\vv,q_h)
    &= \sum_{T\in\Th}\left(
    \int_T\vlproj[T]{k}\vv\cdot\GRAD q_T
    -\int_F(\vlproj[F]{k}\vv[|F]\cdot\normal_{TF})~q_T
    \right)
    \\
    &= \sum_{T\in\Th}\left(
    \int_T\vv\cdot\GRAD q_T
    -\int_F(\vv\cdot\normal_{TF})~q_T
    \right)
    = b(\vv,q_h),
  \end{aligned}
  $$
  where we have used definition \eqref{eq:lproj} after observing that $\GRAD q_T\in\Pvec[k-1]{T}\subset\Pvec[k]{T}$ and 
  $q_{T|F}\normal_{TF}\in\Pvec[k]{F}$ to remove the $L^2$-orthogonal projectors in the second line, and integration by 
  parts over $T\in\Th$ to conclude.
\end{proof}
As a result of the previous Lemma, one has the following inf-sup condition, cf. \cite{FVCA8} for the proof, which follows the classical Fortin argument (see, e.g., \cite[Section 8.4]{Boffi.Brezzi.ea:13} for further details).
\begin{proposition}
  There is a strictly positive real number $\beta$ independent of $h$ such that, for all $q_h\in P^{k}_0(\Th)$,
  \begin{equation}\label{eq:inf-sup}
    \norm[\Omega]{q_h}\le\beta
    \sup_{\uvv[h]\in\UhD\setminus\{\underline{\vec{0}}\}}\frac{b_h(\uvv[h],q_h)}{\norm[\strain,h]{\uvv[h]}}.
  \end{equation}
\end{proposition}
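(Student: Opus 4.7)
The plan is to apply the classical Fortin argument, leveraging the continuous inf-sup condition for the divergence operator together with the stability \eqref{eq:fortin.st} and commuting \eqref{eq:fortin.com} properties of $\Ivh$ just established in the preceding lemma.

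First, I would invoke the continuous inf-sup condition: there exists $\beta_0 > 0$, depending only on $\Omega$, such that for every $q \in L^2_0(\Omega)$,
$$
  \norm[\Omega]{q} \le \beta_0 \sup_{\vv \in \HvecD{\Omega} \setminus \{\vec{0}\}} \frac{b(\vv, q)}{\seminorm[1,\Omega]{\vv}}.
$$
This is a standard consequence of the surjectivity of $\DIV : \HvecD{\Omega} \to L^2_0(\Omega)$, already invoked above in the derivation of the discrete Poincar\'e--Wirtinger inequality.

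Next, fix $q_h \in P^k_0(\Th) \subset L^2_0(\Omega)$; we may assume $q_h \ne 0$, the case $q_h = 0$ being trivial. For any $\vv \in \HvecD{\Omega} \setminus \{\vec{0}\}$, the interpolate $\Ivh\vv$ belongs to $\UhD$ because the face components on $\partial\Omega$ are $L^2$-projections of the vanishing trace of $\vv$. The commuting property \eqref{eq:fortin.com} gives $b_h(\Ivh\vv, q_h) = b(\vv, q_h)$, while the stability estimate \eqref{eq:fortin.st} gives $\norm[\strain,h]{\Ivh\vv} \le C_{\rm st}\seminorm[1,\Omega]{\vv}$. Choosing $\vv$ so that $b(\vv, q_h) > 0$, which is possible thanks to the continuous inf-sup bound applied to the nonzero $q_h$, the interpolate $\Ivh\vv$ is a nonzero element of $\UhD$, and
$$
  \sup_{\uvv[h] \in \UhD \setminus \{\underline{\vec{0}}\}} \frac{b_h(\uvv[h], q_h)}{\norm[\strain,h]{\uvv[h]}}
  \ge \frac{b_h(\Ivh\vv, q_h)}{\norm[\strain,h]{\Ivh\vv}}
  \ge \frac{b(\vv, q_h)}{C_{\rm st}\seminorm[1,\Omega]{\vv}}.
$$
Taking the supremum over $\vv \in \HvecD{\Omega}\setminus\{\vec{0}\}$ and combining with the continuous inf-sup delivers \eqref{eq:inf-sup} with $\beta \coloneq \beta_0 C_{\rm st}$.

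The heart of the matter is the Fortin lemma itself, which has already been established; given it, the remaining steps are essentially bookkeeping. The only mild subtleties are the verification that $\Ivh\vv \in \UhD$ whenever $\vv \in \HvecD{\Omega}$, which is immediate from the vanishing trace, and the nonvacuity of the supremum on the left, which follows by selecting $\vv$ for which the pairing with $q_h$ is strictly positive. No further geometric hypotheses on the mesh beyond those already assumed are required.
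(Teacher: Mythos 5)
Your proof is correct and is exactly the argument the paper has in mind: it states that the inf-sup condition follows from the preceding Fortin lemma via the classical Fortin argument (citing \cite{FVCA8} and \cite[Section 8.4]{Boffi.Brezzi.ea:13}), which is precisely the combination of the continuous inf-sup condition for $\DIV:\HvecD{\Omega}\to L^2_0(\Omega)$ with the stability \eqref{eq:fortin.st} and commuting \eqref{eq:fortin.com} properties of $\Ivh$ that you carry out, yielding $\beta=\beta_0 C_{\rm st}$. No issues to report.
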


\subsection{Darcy operator}
The discretization of the Darcy operator is based on the Symmetric Weighted Interior Penalty method of~\cite{Di-Pietro.Ern.ea:08}, cf. also~\cite[Section~4.5]{Di-Pietro.Ern:12}.
For all $F\in\Fhi$ and all $q_h\in P^{k}(\Th)$, we define the jump and weighted average operators such that
$$
\jump{q_h}\coloneq q_{T_{F,1}}-q_{T_{F,2}},\qquad
\wavg{q_h}\coloneq
\frac{\sqrt{\sdiff[F,2]}}{\sqrt{\sdiff[F,1]}+\sqrt{\sdiff[F,2]}}q_{T_{F,1}} 
+ \frac{\sqrt{\sdiff[F,1]}}{\sqrt{\sdiff[F,1]}+\sqrt{\sdiff[F,2]}}q_{T_{F,2}},
$$
with $T_{F,1}, T_{F,2}\in\Th$, $T_{F,1}\neq T_{F,2}$, such that $F\subset \partial T_{F,1}\cap \partial T_{F,2}$ and 
$\sdiff[F,1],\sdiff[F,2]$ defined in \eqref{eq:diff_coeff}.
The bilinear form $c_h$ on $P^k(\Th)\times P^k(\Th)$ is defined such that, for all $q_h,r_h\in P^k(\Th)$,
$$
  \begin{aligned}
    c_h(r_h,q_h)
    \coloneq&
    \int_\Omega\diff\GRADh r_h\cdot\GRADh q_h + 
    \sum_{F\in\Fhi}\frac{\varsigma \sdiff}{h_F}
    \int_F\jump{r_h}\jump{q_h}
    \\
    &-\sum_{F\in\Fhi}\int_F\left(
    \jump{r_h}\wavg{\diff\GRADh q_h}
    +\jump{q_h}\wavg{\diff\GRADh r_h}
    \right)\cdot\normal_{T_1F},
  \end{aligned}
$$
where we have introduced the broken gradient operator $\GRADh$ on $\Th$
and we have denoted by $\varsigma>\underline{\varsigma}>0$ a user-defined penalty parameter chosen large enough to 
ensure the coercivity of $c_h$ (the proof is similar to~\cite[Lemma 4.51]{Di-Pietro.Ern:12}): 
$$
c_h(q_h,q_h)\ge(\varsigma-\underline{\varsigma})(1 + \varsigma)^{-1} \norm[\diff,h]{q_h}^2, \qquad \forall q_h\in\Ph.
$$
Since, under this condition, $c_h$ is a symmetric positive definite bilinear form on the broken polynomial space $\Ph$, we can define an associated norm by setting $\norm[c,h]{\cdot}\coloneq c_h(\cdot,\cdot)^{\frac12}$.

The following consistency result can be proved adapting the arguments of~\cite[Chapter~4]{Di-Pietro.Ern:12} to homogeneous Neumann boundary conditions and will be instrumental for the analysis.
We define the functional spaces $P_*\coloneq\left\{r\in H^1(\Omega)\cap H^2({P_\Omega})\st\text{$\diff\GRAD r\cdot\normal=0$ on $\partial\Omega$}\right\}$ and set $P_{*h}^k\coloneq P_* + \Ph$. Extending the bilinear form $c_h$ to 
$P_{*h}^k\times P_{*h}^k$, it is inferred that, for all $r\in P_*$, 
\begin{equation}
  \label{eq:ch.consist}
  -(\DIV(\diff\GRAD r), q)_\Omega
  = c_h(r, q)\qquad\forall q\in P_{*h}.
\end{equation}

\subsection{Discrete problem}\label{sec:disc.pb}

For all $1\le n\le N$, the discrete solution $(\uvu[h]^n,p_h^n)\in\UhD\times\Ph$ at time $t^n$ is such that, 
for all $(\uvv[h],q_{h})\in\UhD\times P^k(\Th)$,
\begin{subequations}\label{eq:nl_biot.h}
  \begin{align}
    \label{eq:nl_biot.h:mech}
    a_h(\uvu[h]^n,\uvv[h]) + b_h(\uvv[h],p_h^n) &= (\ovf^n,\vv[h])_{\Omega},
    \\
    \label{eq:nl_biot.h:flow}
    C_0(\ddt p_h^n, q_h)_{\Omega} -b_h(\ddt\uvu[h]^n,q_h) + c_h(p_h^n,q_h) &= (\overline{g}^n,q_h)_{\Omega},
  \end{align}
with $\ovf^n\in\Lvec{\Omega}$ and $\overline{g}^n\in L^2(\Omega)$ defined according to \eqref{eq:time.average}. In order to start the time-stepping scheme, we need to initialize the discrete fluid content. This is done by setting 
$\phi_h^0$ equal to the $L^2$-orthogonal projection of $\phi^0$ on $P^k(\Th)$ according to \eqref{eq:weak_form.initial}, 
that is,
\begin{equation}
  \label{eq:nl_biot.h:initial}
  C_0(p_h^0, q_h)_{\Omega} -b_h(\uvu[h]^0,q_h) \coloneq (\phi^0, q_h)_{\Omega} \qquad\forall q_h\in P^k(\Th).
\end{equation}
\end{subequations}
\begin{remark}[Initial condition]
We observe that the initial displacement $\uvu[h]^0\in\UhD$ and pressure $p_h^0\in\Ph$ in \eqref{eq:nl_biot.h:initial} are not explicitly required to initialize the scheme. However, assuming that $\vf\in C^0(\Lvec{\Omega})$, so that~\eqref{eq:nl_biot.strong.mech} makes sense also for $t=0$, it is possible to compute the initial discrete fields $(\uvu[h]^0,p_h^0)$  by solving
\begin{subequations}\label{eq:initial.h}
  \begin{align}
    \label{eq:initial:mech}
    a_h(\uvu[h]^0,\uvv[h]) + b_h(\uvv[h],p_h^0) &= (\vf^0,\vv[h])_{\Omega} \qquad\forall \uvv[h]\in\UhD,
    \\
    \label{eq:initial:fluid}
    C_0(p_h^0, q_h)_{\Omega} -b_h(\uvu[h]^0,q_h) &= (\phi^0, q_h)_{\Omega} \ \qquad\forall q_h\in P^k(\Th).
  \end{align}
\end{subequations}
In the limit case $C_0=0$ the previous equations corresponds to a well-posed HHO discretization of a steady nonlinear Stokes-like problem. If $C_0>0$ we can take $q_h$ in \eqref{eq:initial:fluid} such that, for all $T\in\Th$, $(q_h)_{|T}=C_0^{-1}\optr(\GTs\uvv[T])$ and sum the resulting equation to \eqref{eq:initial:mech}. Owing to definitions \eqref{eq:GTs} and \eqref{eq:bh}, we obtain
\begin{equation}\label{eq:initial.elh}
  \tilde{a}_h(\uvu[h]^0,\uvv[h])= (\vf^0,\vv[h])_{\Omega} - C_0^{-1} b_h(\uvv[h],\lproj[h]{k}\phi^0),
\end{equation}
where the nonlinear function $\tilde{a}_h$ is defined as $a_h$ in \eqref{eq:ah} but replacing the stress-strain law $\ms$ with
$$
\tilde{\ms}(\cdot,\vec{\tau})\coloneq\ms(\cdot,\vec{\tau}) + C_0^{-1}\optr(\vec\tau)\Id.
$$ 
According to \cite[Theorem 7]{Botti.Di-Pietro.Sochala:17}, the nonlinear elasticity problem \eqref{eq:initial.elh} admits a solution. Once the initial displacement $\uvu[h]^0$ is computed, we set $p_h^0$ such that, for all $T\in\Th$, $(p_h^0)_{|T}=C_0^{-1} (\lproj[T]{k}\phi^0-\optr(\GTs\uvu[T]^0))$.
\end{remark}
\begin{remark}[Time discretization]
The modified backward Euler scheme obtained by taking time averages instead of pointwise evaluation of the right-hand sides in \eqref{eq:nl_biot.h} can be interpreted as a low-order discontinuous Galerkin time-stepping method, 
cf. \cite{Schautzau.Schwab:00,Smears:17}.
\end{remark}
Notice that other time discretizations could be used, but we have decided to focus on the backward Euler scheme to keep the proofs as simple as possible.
From the practical point of view, at each time step $n$, the discrete nonlinear system~\eqref{eq:nl_biot.h} can be solved by the Newton method using as initial guess the solution at step $(n-1)$.
The size of the linear system to be solved at each Newton iteration can be reduced by statically condensing a large part of the unknowns as described in \cite[Section 5]{Boffi.Botti.Di-Pietro:16}.

%-------------------------------------------------------------------------------------------------------------

\section{Stability and well-posedness}\label{sec:stability}

In this section we study the stability of problem~\eqref{eq:nl_biot.h} and prove its well-posedness. We start with an a priori estimate on the discrete solution not requiring conditions on the time step $\tau$ and robust with respect to vanishing storage coefficients and small permeability.

\begin{proposition}[A priori estimate] \label{pro:a-priori}
  Denote by $(\uvu[h]^n,p_h^n)_{1\le n\le N}$ the solution to~\eqref{eq:nl_biot.h}. Under Assumption \ref{ass:hypo} on the stress-strain relation and the regularity on the data $\vf$, $g$, and $\phi^0$ assumed in Section \ref{sec:weak_form}, it holds
  \begin{multline}\label{eq:a-priori}
    \sum_{n=1}^N\tau\norm[\strain,h]{\uvu[h]^n}^2
    +\sum_{n=1}^N\tau\left(\norm[\Omega]{p_h^n-\lproj[\Omega]{0}p_h^n}^2 + C_0\norm[\Omega]{p_h^n}^2\right)
    + \norm[c,h]{s_h^N}^2
    \le
    \\ \quad C\left( 
    \norm[L^2(\Lvec{\Omega})]{\vf}^2+\tF^2\norm[L^2(L^2(\Omega))]{g}^2+\tF\norm[\Omega]{\phi^0}^2
    +\tF^2{C_0}^{-1}\norm[L^2(L^2(\Omega))]{\lproj[\Omega]{0}g}^2
    +\tF{C_0}^{-1}\norm[\Omega]{\lproj[\Omega]{0}\phi^0}^2.
    \right).
  \end{multline}
  where $C>0$ denotes a real number independent of $h$, $\tau$, the physical parameters $C_0$ 
  and $\diff$, and the final time $\tF$.
  In \eqref{eq:a-priori}, we have defined $s_h^N\coloneq\sum_{n=1}^N \tau p_h^n$ and we have adopted the 
  convention that $C_0^{-1}\norm[L^2(L^2(\Omega))]{\lproj[\Omega]{0}{g}}^2=0$ 
  and $C_0^{-1}\norm[\Omega]{\lproj[\Omega]{0}{\phi^0}}^2=0$ if $C_0=0$.
\end{proposition}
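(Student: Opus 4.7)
The plan is to derive the estimate by deriving a coupled energy inequality for mechanics and flow, then controlling the pressure in two pieces: the zero-mean part via the inf-sup condition, and (when $C_0>0$) the mean part via a test against the constant function.

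\textbf{Step 1: Integrated flow equation.} First I would transform the flow equation so that the $c_h$-term is the only one involving a temporal sum. Summing \eqref{eq:nl_biot.h:flow} from $m=1$ to $n$ makes the discrete time derivatives telescope; using also the initial condition \eqref{eq:nl_biot.h:initial} to absorb the $m=0$ terms, one obtains, with $s_h^n\coloneq \sum_{m=1}^n\tau p_h^m$ and $G^n\coloneq\sum_{m=1}^n\tau\overline g^m=\int_0^{t^n}g(t)\,{\rm d}t$,
\[
C_0(p_h^n,q_h)_\Omega - b_h(\uvu[h]^n,q_h) + c_h(s_h^n,q_h)=(G^n+\phi^0,q_h)_\Omega\qquad\forall q_h\in P^k(\Th).
\]

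\textbf{Step 2: Energy identity.} I would then test \eqref{eq:nl_biot.h:mech} with $\uvv=\uvu[h]^n$ and the integrated flow equation with $q_h=p_h^n$, so that the coupling terms $\pm b_h(\uvu[h]^n,p_h^n)$ cancel. Writing $c_h(s_h^n,p_h^n)=\tau^{-1}c_h(s_h^n,s_h^n-s_h^{n-1})$ and using the identity $c_h(a,a-b)\ge\tfrac12(\norm[c,h]{a}^2-\norm[c,h]{b}^2)$, then multiplying by $\tau$ and summing for $n=1,\dots,N$ (with $s_h^0=0$), one gets
\[
\sum_{n=1}^N\tau\, a_h(\uvu[h]^n,\uvu[h]^n) + \sum_{n=1}^N\tau C_0\norm[\Omega]{p_h^n}^2 + \tfrac12\norm[c,h]{s_h^N}^2
\le \sum_{n=1}^N\tau(\ovf^n,\vu[h]^n)_\Omega + \sum_{n=1}^N\tau(G^n+\phi^0,p_h^n)_\Omega.
\]
The LHS mechanics term is bounded below by $C\sum_n\tau\norm[\strain,h]{\uvu[h]^n}^2$ via the coercivity \eqref{eq:hypo.coercivity} of $\ms$ and the seminorm equivalence \eqref{eq:norm_equivalence}.

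\textbf{Step 3: Bounding the RHS and recovering the zero-mean pressure norm.} For the load term, Korn's inequality \eqref{eq:Korn}, Cauchy--Schwarz, Jensen (giving $\sum_n\tau\norm[\Omega]{\ovf^n}^2\le\norm[L^2(\Lvec{\Omega})]{\vf}^2$), and Young's inequality yield a bound absorbable by the mechanics LHS plus $\norm[L^2(\Lvec{\Omega})]{\vf}^2$. For the pressure term, I would split $p_h^n=(p_h^n-\lproj[\Omega]{0}p_h^n)+\lproj[\Omega]{0}p_h^n$. To control the zero-mean piece independently of $C_0$, I would apply the inf-sup condition \eqref{eq:inf-sup}: since $p_h^n-\lproj[\Omega]{0}p_h^n\in P_0^k(\Th)$ and $b_h(\uvv,1)=0$ by \eqref{eq:bh(vh,1)=0},
\[
\norm[\Omega]{p_h^n-\lproj[\Omega]{0}p_h^n}\le \beta\sup_{\uvv\in\UhD\setminus\{\underline{\vec 0}\}}\frac{b_h(\uvv,p_h^n)}{\norm[\strain,h]{\uvv}}
=\beta\sup_{\uvv}\frac{-a_h(\uvu[h]^n,\uvv)+(\ovf^n,\vv[h])_\Omega}{\norm[\strain,h]{\uvv}},
\]
which via \eqref{eq:hypo.growth} and Korn yields $\norm[\Omega]{p_h^n-\lproj[\Omega]{0}p_h^n}\lesssim \norm[\strain,h]{\uvu[h]^n}+\norm[\Omega]{\ovf^n}$.

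\textbf{Step 4: Mean pressure and conclusion.} To control the mean part, I would test the integrated flow equation with $q_h=1$. Using $b_h(\uvu[h]^n,1)=0$ and $c_h(s_h^n,1)=0$ (constants annihilate $c_h$), this gives $C_0\int_\Omega p_h^n=\int_\Omega(G^n+\phi^0)$, so when $C_0>0$ one explicitly has $\lproj[\Omega]{0}p_h^n=C_0^{-1}\lproj[\Omega]{0}(G^n+\phi^0)$, whence (by Jensen applied to $G^n$) $\sum_n\tau C_0\norm[\Omega]{\lproj[\Omega]{0}p_h^n}^2$ is bounded by the last two terms of the RHS of \eqref{eq:a-priori}; when $C_0=0$, $\lproj[\Omega]{0}p_h^n=0$ by construction and this contribution vanishes, so the estimate remains robust. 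It remains to bound $\sum_n\tau(G^n+\phi^0,p_h^n)_\Omega$: splitting as above, using Cauchy--Schwarz on each piece, Young's inequality to absorb $\norm[\Omega]{p_h^n-\lproj[\Omega]{0}p_h^n}^2$ and $C_0\norm[\Omega]{\lproj[\Omega]{0}p_h^n}^2$ into the LHS, and the already-established estimate of Step 3. The main obstacle I anticipate is bookkeeping the $C_0$-robustness: one must carefully keep the mean and zero-mean pressure contributions separate, so that the $C_0^{-1}$ factors only ever appear on $\lproj[\Omega]{0}g$ and $\lproj[\Omega]{0}\phi^0$ and disappear entirely when the compatibility conditions \eqref{eq:compatibility} hold.
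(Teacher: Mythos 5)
Your proposal is correct and follows essentially the same line of argument as the paper's proof: sum the flow equation in time and test with $p_h^n$ to cancel the coupling term, obtain the energy balance with the telescoping $c_h$-term, control the zero-mean part of the pressure via the inf-sup condition combined with the growth bound on $a_h$, and bound the data terms using Korn, Cauchy--Schwarz, Jensen, and Young inequalities with careful tracking of the $C_0$-dependence. The only cosmetic difference is in Step~4: you extract an explicit identity $C_0\lproj[\Omega]{0}p_h^n=\lproj[\Omega]{0}(G^n+\phi^0)$ by testing the integrated flow equation with $q_h=1$, whereas the paper handles the mean contribution $\term_2$ directly via a Young inequality and absorption of $\tfrac{3C_0}{4}\sum_n\tau\norm[\Omega]{p_h^n}^2$ into the left-hand side; both routes deliver the same $C_0$-robust estimate.
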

\begin{remark}
\label{rem:time_reg1}
In order to prove the a priori bound \eqref{eq:a-priori}, no additional time regularity assumption on the loading term 
$\vf$ and the mass source $g$ are needed, whereas the stability estimate of \cite[Lemma 7]{Boffi.Botti.Di-Pietro:16}, valid for linear stress-strain relation, requires $\vf\in C^1(\Lvec{\Omega})$ and $g\in C^0(L^2(\Omega))$. On the other hand, Proposition \ref{pro:a-priori} gives an estimate of the discrete displacement and pressure in the $L^2$-norm in time, while \cite[Lemma 7]{Boffi.Botti.Di-Pietro:16} ensures a control in the $L^\infty$-norm in time.
However, under additional requirements on the stress-strain law (for instance Assumption \ref{ass:hypo_add}) and 
$H^1$-regularity in time of $\vf$, a stronger version of \eqref{eq:a-priori} can be inferred, including in particular an estimate in the $L^\infty$-norm in time.
\end{remark}
\begin{proof}
  (i) \emph{Estimate of $\norm[\Omega]{p_h^n-\lproj[\Omega]{0}p_h^n}$.} 
  The growth property of the stress-strain function~\eqref{eq:hypo.growth} together with the 
  Cauchy--Schwarz inequality, assumption \eqref{eq:penalty_mech} on the stabilization parameter, and the second inequality 
  in~\eqref{eq:norm_equivalence} yield, for all $1\le n\le N$ and all $\uvv\in\UhD$,  
  \begin{equation}
  \begin{aligned}
    \label{eq:gr_bd}
    a_h(\uvu[h]^n,\uvv[h])
    &=
    \sum_{T\in\Th}\left(\int_T{\ms(\cdot,\GTs\uvu[T]^n):\GTs\uvv[T]} 
    +\sum_{F\in\Fh[T]}\frac{\gamma}{h_F}\int_F\vec{\varDelta}_{TF}^k\uvu[T]^n\cdot\vec{\varDelta}_{TF}^k\uvv[T]\right)
    \\ &\le C_{\rm gr}
    \sum_{T\in\Th}\left(\norm[T]{\GTs\uvu[T]^n}\norm[T]{\GTs\uvv[T]}
    +\sum_{F\in\Fh[T]}\frac{1}{h_F}\norm[F]{\vec{\varDelta}_{TF}^k\uvu[T]^n}\norm[F]{\vec{\varDelta}_{TF}^k\uvv[T]}\right)
    \\ &\le C_{\rm gr}C_{\rm eq}^2 
    \norm[\strain,h]{\uvu[h]^n}\norm[\strain,h]{\uvv[h]}.
  \end{aligned}
  \end{equation}
  Using the inf-sup condition~\eqref{eq:inf-sup}, \eqref{eq:bh(vh,1)=0}, and the 
  mechanical equilibrium equation~\eqref{eq:nl_biot.h:mech}, we get, for any $1\le n\le N$,
  $$
    \norm[\Omega]{p_h^n-\lproj[\Omega]{0}p_h^n} \le \beta
    \sup_{\uvv[h]\in\UhD\setminus\{\underline{\vec{0}}\}}
    \frac{b_h(\uvv[h], p_h^n-\lproj[\Omega]{0}p_h^n)}{\norm[\strain,h]{\uvv[h]}}
    = \beta\sup_{\uvv[h]\in\UhD\setminus\{\underline{\vec{0}}\}}
    \frac{(\ovf^n,\vv[h])_\Omega - a_h(\uvu[h]^n,\uvv[h])}{\norm[\strain,h]{\uvv[h]}}.
  $$
  Therefore, owing to the discrete Korn inequality \eqref{eq:Korn} and to \eqref{eq:gr_bd}, we infer from the previous 
  bound that 
  \begin{equation}
    \label{eq:stability:bnd.phn}
    \norm[\Omega]{p_h^n-\lproj[\Omega]{0}p_h^n}
    \le \beta\left(C_{\rm K}\norm[\Omega]{\ovf^n}+C_{\rm gr}C_{\rm eq}^2\norm[\strain,h]{\uvu[h]^n}\right).
  \end{equation}
  \\
  (ii) \emph{Energy balance.}
  For all $1\le n \le N$, summing~\eqref{eq:nl_biot.h:flow} at times $1\le i\le n$, taking $q_h=\tau^2 p_h^n$ as a test function, and recalling the discrete initial condition \eqref{eq:nl_biot.h:initial} yields
  \begin{equation}
    \label{eq:stab.fluid:1}
    \tau C_0(p_h^n, p_h^n)_\Omega - \tau b_h(\uvu[h]^n, p_h^n)+ \sum_{i=1}^n \tau^2 c_h(p_h^i, p_h^n)
    = \sum_{i=1}^n \tau^2 (\overline{g}^i, p_h^n)_\Omega + \tau(\phi^0, p_h^n)_\Omega.
  \end{equation} 
  Moreover, using the linearity of $c_h$ and the formula $2x(x-y) = x^2+(x-y)^2-y^2$, the third term in the left-hand 
  side of \eqref{eq:stab.fluid:1} can be rewritten as
  $$
    \sum_{i=1}^n \tau^2 c_h(p_h^i, p_h^n)
    = \tau c_h \left(\sum_{i=1}^n \tau p_h^i, p_h^n\right)
    = \tau c_h (s_h^n, \ddt s_h^n)
    =\frac12 \left(\norm[c,h]{s_h^n}^2 + \norm[c,h]{\ddt s_h^n}^2- \norm[c,h]{s_h^{n-1}}^2\right),
  $$
  where we have set $s_h^0\coloneq0$, $s_h^n\coloneq\sum_{i=1}^n\tau p_h^i$ for any $1\le n\le N$, and observed 
  that $p_h^n=\ddt s_h^n$. Therefore, summing~\eqref{eq:stab.fluid:1} and~\eqref{eq:nl_biot.h:mech} at discrete time $n$ 
  with $\uvv[h]=\tau\uvu[h]^n$, leads to
  $$
    \tau a_h(\uvu[h]^n, \uvu[h]^n) + \tau C_0 \norm[\Omega]{p_h^n}^2 + 
    \frac12 \left(\norm[c,h]{s_h^n}^2 - \norm[c,h]{s_h^{n-1}}^2\right)
    \le \tau(\ovf^n, \vu[h]^n)_\Omega + \sum_{i=1}^n \tau^2 (\overline{g}^i, p_h^n)_\Omega 
    + \tau(\phi^0, p_h^n)_\Omega.
  $$
  Summing the previous relation for $1\le n\le N$, telescoping out the appropriate summands, and using the coercivity 
  property~\eqref{eq:hypo.coercivity}, assumption \eqref{eq:penalty_mech}, and the first inequality 
  in~\eqref{eq:norm_equivalence}, we get
  \begin{equation}
    \label{eq:en.balance}
    \frac{C_{\rm cv}^2}{C_{\rm eq}^2}\sum_{n=1}^N \tau \norm[\strain,h]{\uvu[h]^n}^2
    + C_0 \sum_{n=1}^N \tau\norm[\Omega]{p_h^n}^2 + \frac12\norm[c,h]{s_h^N}^2
    \le \sum_{n=1}^N \tau (\ovf^n, \vu[h]^n)_\Omega 
    + \sum_{n=1}^N \tau (G^n + \phi^0, p_h^n)_\Omega,
  \end{equation}
  with the notation $G^n\coloneq \sum_{i=1}^n \tau\overline{g}^i=\int_0^{t^n}g(t) {\rm d}t$. 
  We denote by $\mathcal{R}$ the right-hand side of~\eqref{eq:en.balance} and proceed to find a suitable upper bound.
  \medskip\\
  (iii) \emph{Upper bound for $\mathcal{R}$.}
  For the first term in the right-hand side of~\eqref{eq:en.balance}, using the Cauchy--Schwarz, discrete 
  Korn \eqref{eq:Korn}, and Young inequalities, we obtain
  \begin{equation}
    \label{eq:stability:R1}
    \sum_{n=1}^N \tau (\ovf^n,\vu[h]^n)_\Omega
    \le C_{\rm K}\left(\sum_{n=1}^N \tau\norm[\Omega]{\ovf^n}^2\right)^{\frac12}
    \left(\sum_{n=1}^{N}\tau\norm[\strain,h]{\uvu[h]^n}^2\right)^{\frac12}
    \le \frac{C_{\rm K}^2 C_{\rm eq}^2}{C_{\rm cv}^2} \norm[L^2(\Lvec{\Omega})]{\vf}^2
    + \frac{C_{\rm cv}^2}{4 C_{\rm eq}^2}\sum_{n=1}^N\tau\norm[\strain,h]{\uvu[h]^n}^2,
  \end{equation}
  where we have used the Jensen inequality to infer that 
  $$
  \sum_{n=1}^N\tau\norm[\Omega]{\ovf^n}^2 
  =\sum_{n=1}^N\frac1\tau\int_\Omega\left(\int_{t^{n-1}}^{t^n}\vf(\vec{x},t) {\rm d}t\right)^2 {\rm d}\vec{x}
  \le\sum_{n=1}^N\int_{t^{n-1}}^{t^n}\norm[\Omega]{\vf(t)}^2{\rm d}t=\norm[L^2(\Lvec{\Omega})]{\vf}^2.
  $$
  We estimate the second term in the right-hand side of~\eqref{eq:en.balance} by splitting it into two 
  contributions as follows:
  $$
    \sum_{n=1}^N \tau (G^n+\phi^0, p_h^n)_\Omega = 
    \sum_{n=1}^N \tau (G^n+\phi^0,p_h^n-\lproj[\Omega]{0}p_h^n)_\Omega
    + \sum_{n=1}^N \tau (\lproj[\Omega]{0}(G^n+\phi^0), p_h^n)_\Omega \coloneq \term_1 + \term_2,
    $$
  where we have used its definition \eqref{eq:lproj} to move $\lproj[\Omega]{0}$ from $p_h^n$ to $(G^n+\phi^0)$ 
  in the second term. Owing to the Cauchy--Schwarz, triangle, Jensen, and Young inequalities, 
  and using~\eqref{eq:stability:bnd.phn}, we have 
  \begin{equation}
    \label{eq:stability:bnd.R2}
    \begin{aligned}
      \left|\term_1\right|
      &\le \left(\sum_{n=1}^N\tau\norm[\Omega]{G^n+\phi^0}^2\right)^{\frac12}
      \left(\sum_{n=1}^N\tau\norm[\Omega]{p_h^n-\lproj[\Omega]{0}p_h^n}^2\right)^{\frac12}
      \\
      &\le\sqrt2\beta\left(
      \sum_{n=1}^N\tau\int_\Omega\left(\int_0^{t^n}\hspace{-2mm}g(\vec{x},t) {\rm d}t\right)^2\hspace{-1mm}{\rm d}\vec{x}
      +\tF\norm[\Omega]{\phi^0}^2\right)^{\frac12}
      \left(\sum_{n=1}^N\tau
      \left(C_{\rm K}\norm[\Omega]{\ovf^n}+C_{\rm gr}C_{\rm eq}^2\norm[\strain,h]{\uvu[h]^n}\right)^2
      \right)^{\frac12}
      \\
      &\le 
      2\beta\left(
      \tF\norm[L^2(L^2(\Omega))]{g}+\tF^{\frac12}\norm[\Omega]{\phi^0}
      \right)
      \left[C_{\rm K}\norm[L^2(\Lvec{\Omega})]{\vf}+
      C_{\rm gr}C_{\rm eq}^2\left(\sum_{n=1}^N\tau\norm[\strain,h]{\uvu[h]^n}^2\right)^{\frac12}\right]
      \\
      &\le
      \tF\beta^2\left(1+\frac{C_{\rm gr}^2C_{\rm eq}^6}{C_{\rm cv}^2}\right)
      \left(\tF^{\frac12}\norm[L^2(L^2(\Omega))]{g}+\norm[\Omega]{\phi^0}\right)^2
      + C_{\rm K}^2 \norm[L^2(\Lvec{\Omega})]{\vf}^2
      + \frac{C_{\rm cv}^2}{4 C_{\rm eq}^2}\sum_{n=1}^N\tau\norm[\strain,h]{\uvu[h]^n}^2.
    \end{aligned}
  \end{equation}
  Owing to the compatibility condition~\eqref{eq:compatibility} and the linearity of the $L^2$-projector, $\term_2=0$ 
  if $C_0=0$. Otherwise, using again the Cauchy--Schwarz, triangle, Jensen, and Young inequalities, leads to
  \begin{equation}  
    \label{eq:stability:bnd.R2b}
      \begin{aligned}
      \left|\term_2\right|
      &\le \left(\sum_{n=1}^N\tau\norm[\Omega]{\lproj[\Omega]{0}G^n+\lproj[\Omega]{0}\phi^0}^2\right)^{\frac12}      
      \left(\sum_{n=1}^N\tau \norm[\Omega]{p_h^n}^2\right)^{\frac12}
      \\
      &\le
      \sqrt2\left(\tF^2\norm[L^2(L^2(\Omega))]{\lproj[\Omega]{0}g}^2+\tF\norm[\Omega]{\lproj[\Omega]{0}\phi^0}^2
      \right)^{\frac12}
      \left(\sum_{n=1}^N\tau \norm[\Omega]{p_h^n}^2\right)^{\frac12}
      \\
      &\le
      \frac{2\tF^2}{3C_0}\norm[L^2(L^2(\Omega))]{\lproj[\Omega]{0}g}^2
      +\frac{2\tF}{3C_0}\norm[\Omega]{\lproj[\Omega]{0}\phi^0}^2
      +\frac{3C_0}4\sum_{n=1}^N\tau\norm[\Omega]{p_h^n}^2.
      \end{aligned}
  \end{equation}
  Finally, from~\eqref{eq:stability:R1},~\eqref{eq:stability:bnd.R2},~\eqref{eq:stability:bnd.R2b}, it follows that
  \begin{multline}
    \label{eq:stability:bnd.R}
    \mathcal{R}
    \le
    \frac{C_{\rm cv}^2}{2 C_{\rm eq}^2}\sum_{n=1}^N\tau\norm[\strain,h]{\uvu[h]^n}^2
    +\frac{3C_0}4\sum_{n=1}^N\tau\norm[\Omega]{p_h^n}^2
    +\frac{2\tF^2}{3C_0}\norm[L^2(L^2(\Omega))]{\lproj[\Omega]{0}g}^2
    +\frac{2\tF}{3C_0}\norm[\Omega]{\lproj[\Omega]{0}\phi^0}^2
    \\
    +C_{\rm K}^2 C_{\rm cv}^{-2}\left(C_{\rm eq}^2+C_{\rm cv}^2\right)\norm[L^2(\Lvec{\Omega})]{\vf}^2
    +\tF\beta^2C_{\rm cv}^{-2} \left(4 C_{\rm gr}^2 C_{\rm eq}^6+C_{\rm cv}^{2}\right)
    \left(\tF^{\frac12}\norm[L^2(L^2(\Omega))]{g}+\norm[\Omega]{\phi^0}\right)^2.
  \end{multline}
  \\
  (iv) \emph{Conclusion.}
  Passing the first two terms in the right-hand side of~\eqref{eq:stability:bnd.R} to the left-hand side 
  of~\eqref{eq:en.balance} and multiplying both sides by a factor $4$, we obtain
  \begin{equation}
    \label{eq:non.robust.bnd}
    \frac{2 C_{\rm cv}^2}{C_{\rm eq}^2}\sum_{n=1}^N\tau\norm[\strain,h]{\uvu[h]^n}^2
    + C_0\sum_{n=1}^N\tau\norm[\Omega]{p_h^n}^2
    + 2\norm[c,h]{s_h^N}^2
    \le 4\mathcal{C},
  \end{equation}
  where we have denoted by $\mathcal{C}$ the last four summands in the right-hand side of~\eqref{eq:stability:bnd.R}.
  In order to conclude we apply again~\eqref{eq:stability:bnd.phn} to obtain a bound of the $L^2$-norm of the 
  discrete pressure independent of the storage coefficient $C_0$. Indeed, owing to~\eqref{eq:stability:bnd.phn} 
  and $C_{\rm cv}^2\le C_{\rm gr}$ (see Remark \ref{rem:gamma}), it is inferred that
  $$
    \frac{C_{\rm cv}^2}{2C_{\rm gr}^2 C_{\rm eq}^6}\sum_{n=1}^N\tau\norm[\Omega]{p_h^n-\lproj[\Omega]{0} p_h^n}^2
    \le 
    \frac{C_{\rm cv}^2}{C_{\rm eq}^2}\sum_{n=1}^N\tau\norm[\strain,h]{\uvu[h]^n}^2
    +\frac{\beta^2 C_{\rm K}^2}{C_{\rm cv}^2 C_{\rm eq}^6}\norm[L^2(\Lvec{\Omega})]{\vf}^2.
  $$
  Summing the previous relation to \eqref{eq:non.robust.bnd} yields
  \begin{multline*}
    \frac{C_{\rm cv}^2}{C_{\rm eq}^2}\sum_{n=1}^N\tau\norm[\strain,h]{\uvu[h]^n}^2
    +\frac{C_{\rm cv}^2}{2C_{\rm gr}^2 C_{\rm eq}^6}\sum_{n=1}^N\tau\norm[\Omega]{p_h^n-\lproj[\Omega]{0}p_h^n}^2
    + C_0\sum_{n=1}^N\tau\norm[\Omega]{p_h^n}^2
    + 2\norm[c,h]{s_h^N}^2
    \le
    \\
    C_{\rm K}^2 \left(\frac{4 C_{\rm eq}^2}{C_{\rm cv}^2}+\frac{\beta^2}{C_{\rm eq}^6 C_{\rm cv}^2}+4\right)
    \norm[L^2(\Lvec{\Omega})]{\vf}^2
    +4\tF\beta^2 \left(\frac{4 C_{\rm gr}^2 C_{\rm eq}^6}{C_{\rm cv}^2}+1\right)
    (\tF^{\frac12}\norm[L^2(L^2(\Omega))]{g}+\norm[\Omega]{\phi^0})^2
    \\
    +\frac{8\tF^2}{3C_0}\norm[L^2(L^2(\Omega))]{\lproj[\Omega]{0}g}^2
    +\frac{8\tF}{3C_0}\norm[\Omega]{\lproj[\Omega]{0}\phi^0}^2.
  \end{multline*}
  Thus, multiplying both sides of the previous relation by $\max\{ C_{\rm eq}^2 C_{\rm cv}^{-2},\, 
  2C_{\rm gr}^2 C_{\rm eq}^6 C_{\rm cv}^{-2},\, 1\}$ gives \eqref{eq:a-priori}.
\end{proof}
\begin{remark}[A priori bound for $C_0=0$]
  When $C_0=0$, the a priori bound \eqref{eq:a-priori} reads
  $$
    \sum_{n=1}^N\tau\norm[\strain,h]{\uvu[h]^n}^2
    +\sum_{n=1}^N\tau\norm[\Omega]{p_h^n}^2
    +\norm[c,h]{s_h^N}^2
    \le C\left(\norm[L^2(\Lvec{\Omega})]{\vf}^2+\tF^2\norm[L^2(L^2(\Omega))]{g}^2+\tF\norm[\Omega]{\phi^0}^2\right).
  $$
  The conventions $C_0^{-1}\norm[L^2(L^2(\Omega))]{\lproj[\Omega]{0}g}^2=0$ and 
  $C_0^{-1}\norm[\Omega]{\lproj[\Omega]{0}{\phi^0}}^2=0$ if $C_0=0$ are justified since the term 
  $\term_{2}$ in point (3) of the previous proof vanishes in this case thanks to the compatibility 
  condition~\eqref{eq:compatibility}.
\end{remark}

We next proceed to discuss the existence and uniqueness of the discrete solutions. The proof of the following theorem hinges on the arguments of~\cite[Theorem 3.3]{Deimling:85}.
\begin{theorem}[Existence and uniqueness]
  Let Assumption~\ref{ass:hypo} hold and let $(\Mh)_{h\in{\cal H}}$ be a regular mesh sequence. Then, for all 
  $h\in{\cal H}$ and all $N\in\Natural^*$, there exists a unique solution 
  $(\uvu[h]^n,p_h^n)_{1\le n \le N}\in(\UhD\times\Ph)^N$ to~\eqref{eq:nl_biot.h}.
\end{theorem}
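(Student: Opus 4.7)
The plan is to proceed by induction on the time index $n$, exploiting the fact that at each step the displacement and pressure at previous times enter only as data on the right-hand side. So assume $(\uvu[h]^{n-1},p_h^{n-1})\in\UhD\times\Ph$ is available (the case $n=1$ using the initialization discussed after \eqref{eq:nl_biot.h:initial}), and rewrite the $n$-th equations as a nonlinear problem: find $(\uvu[h],p_h)\in\UhD\times\Ph$ such that, for all $(\uvv[h],q_h)\in\UhD\times\Ph$,
\begin{align*}
a_h(\uvu[h],\uvv[h]) + b_h(\uvv[h],p_h) &= (\ovf^n,\vv[h])_\Omega,\\
C_0(p_h,q_h)_\Omega - b_h(\uvu[h],q_h) + \tau c_h(p_h,q_h) &= \tau(\overline{g}^n,q_h)_\Omega + C_0(p_h^{n-1},q_h)_\Omega - b_h(\uvu[h]^{n-1},q_h).
\end{align*}

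For existence, I would endow the finite-dimensional space $V_h\coloneq\UhD\times\Ph$ with a Hilbertian structure (for instance using $\norm[\strain,h]{\cdot}$ and $\norm[\Omega]{\cdot}+\norm[c,h]{\cdot}$) and use Riesz representation to recast the above system as $\Phi(\uvu[h],p_h)=\vec{0}$, where $\Phi:V_h\to V_h$ is defined so that its inner product against a test pair reproduces the residual of the two equations. Testing with $(\uvv[h],q_h)=(\uvu[h],p_h)$, the hydro-mechanical cross terms $b_h(\uvu[h],p_h)$ cancel, so using the coercivity \eqref{eq:hypo.coercivity} combined with \eqref{eq:norm_equivalence}, the coercivity of $c_h$ in $\norm[c,h]{\cdot}$, the discrete Korn inequality \eqref{eq:Korn}, and, when $C_0=0$, the discrete Poincar\'e--Wirtinger inequality on $P^k_0(\Th)$ stated after \eqref{eq:p.seminorm}, I would obtain an estimate of the form
$$
(\Phi(\uvu[h],p_h),(\uvu[h],p_h))_{V_h}
\ge C_1\bigl(\norm[\strain,h]{\uvu[h]}^2 + \norm[\Omega]{p_h}^2 + \tau\norm[c,h]{p_h}^2\bigr) - C_2,
$$
with $C_1,C_2>0$ depending on the data of the previous time step and the right-hand sides. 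Continuity of $\Phi$ is straightforward: the bilinear forms $b_h$ and $c_h$ are continuous, and the Carath\'eodory assumption on $\ms$ together with the finite-dimensionality of $\Pmats[k]{T}$ ensures that $a_h$ is continuous in its first argument. Applying the finite-dimensional topological degree / Brouwer argument (\cite[Theorem~3.3]{Deimling:85}) then yields a zero of $\Phi$, i.e.\ a discrete solution.

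For uniqueness, given two solutions $(\uvu[h,i]^n,p_{h,i}^n)$, $i\in\{1,2\}$, I would subtract the two sets of equations, test the first with $\uvv[h]=\uvu[h,1]^n-\uvu[h,2]^n$ and the (rescaled) second with $q_h=p_{h,1}^n-p_{h,2}^n$, and sum, so that the $b_h$ cross terms cancel exactly. This yields
$$
\bigl[a_h(\uvu[h,1]^n,\uve[h])-a_h(\uvu[h,2]^n,\uve[h])\bigr] + \tau C_0\norm[\Omega]{p_{h,1}^n-p_{h,2}^n}^2 + \tau c_h(p_{h,1}^n-p_{h,2}^n,p_{h,1}^n-p_{h,2}^n) = 0,
$$
with $\uve[h]\coloneq\uvu[h,1]^n-\uvu[h,2]^n$. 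Each term on the left is nonnegative: the strict monotonicity \eqref{eq:hypo.monotonicity} of $\ms$ forces $\GTs\uvu[h,1]^n=\GTs\uvu[h,2]^n$ a.e., while the stabilization term in $a_h$ forces the face residuals to agree; by the seminorm equivalence \eqref{eq:norm_equivalence} and the discrete Korn inequality \eqref{eq:Korn}, $\uvu[h,1]^n=\uvu[h,2]^n$. The pressure difference then vanishes directly from $C_0\norm[\Omega]{p_{h,1}^n-p_{h,2}^n}^2=0$ when $C_0>0$, or from $\norm[c,h]{p_{h,1}^n-p_{h,2}^n}=0$ combined with the Poincar\'e--Wirtinger inequality on $P^k_0(\Th)$ when $C_0=0$.

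The main obstacle I anticipate is the existence step, specifically choosing the Hilbertian norm on $V_h$ and the cancellation of the hydro-mechanical coupling in the coercivity estimate; once the test $(\uvv[h],q_h)=(\uvu[h],p_h)$ is made, everything reduces to the a priori-type bound proved in Proposition~\ref{pro:a-priori}, so no genuinely new inequality is needed. Uniqueness is then the routine consequence of the strict monotonicity of $\ms$.
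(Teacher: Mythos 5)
Your proposal is correct in substance, but it follows a genuinely different route from the paper's. The paper does not attack the coupled nonlinear system directly: it introduces an auxiliary \emph{linear} poroelastic problem (see \eqref{eq:nl_biot.h:lin}), obtained by replacing $\ms$ with a linear law ${\ms}^{\rm lin}$ built from the coercivity constant $C_{\rm cv}$, whose well-posedness follows from linearity, squareness, and the a priori estimate of Proposition~\ref{pro:a-priori}; it then defines a displacement-only map $\underline{\vec{\Phi}}_h:\UhD\to\UhD$ through $a_h^{\rm lin}(\underline{\vec{\Phi}}_h(\uvv[h]),\cdot)=a_h(\uvv[h],\cdot)$, proves it is a bijection (injectivity from the strict monotonicity \eqref{eq:hypo.monotonicity}, surjectivity from the very coercivity--surjectivity result of Deimling you invoke), and recovers the solution of \eqref{eq:nl_biot.h} by transporting the auxiliary displacement $\uvy[h]^n$ back through $\underline{\vec{\Phi}}_h^{-1}$, thus obtaining existence and uniqueness simultaneously. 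You instead proceed time step by time step and apply the finite-dimensional surjectivity argument directly to the coupled residual map on $\UhD\times\Ph$, with coercivity coming from the exact cancellation of the $b_h$ coupling terms once the flow equation is scaled by $\tau$, and with a separate uniqueness argument based on the same cancellation plus strict monotonicity and the norm equivalence \eqref{eq:norm_equivalence}. Your route is more elementary and self-contained: it needs neither Proposition~\ref{pro:a-priori} nor the auxiliary linear problem, and it bypasses the transfer step of the paper, where the flow equation satisfied by the auxiliary displacement $\uvy[h]^n$ has to be carried over to $\uvu[h]^n=\underline{\vec{\Phi}}_h^{-1}(\uvy[h]^n)$; the paper's route, in exchange, reduces the nonlinear solve to a purely elastic (displacement-only) map and reuses the stability machinery already established. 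Two small points to tighten in your write-up: (i) the scheme \eqref{eq:nl_biot.h} imposes the flow equation for all $q_h\in P^k(\Th)$, while your per-step reformulation tests only against $\Ph$; when $C_0=0$ the missing constant direction is recovered from \eqref{eq:bh(vh,1)=0}, the fact that $c_h(\cdot,1)=0$, and the compatibility condition \eqref{eq:compatibility}, and this equivalence should be stated explicitly; (ii) the continuity of $\uvv[h]\mapsto a_h(\uvv[h],\uvw[h])$ uses the growth bound \eqref{eq:hypo.growth} (to dominate and pass to the limit in the integrals), not only the Carath\'eodory property and finite-dimensionality.
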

\begin{proof}
  We define the linear stress-strain function ${\ms}^{\rm lin}:\Real^{d\times d}_\symm\to\Real^{d\times d}_\symm$ such that, for all $\matr{\tau}\in\Real^{d\times d}_\symm$, 
  $$
  {\ms}^{\rm lin}(\matr{\tau})=\frac{C_{\rm cv}^2}{2}\matr{\tau} + \frac{C_{\rm cv}^2}{2d}\optr(\matr{\tau})\Id,
  $$
  where $C_{\rm cv}$ is the coercivity constant of $\matr{\sigma}$ (see \eqref{eq:hypo.coercivity}), and we denote by 
  $a_h^{\rm lin}$ the bilinear form obtained by replacing $\ms$ with ${\ms}^{\rm lin}$ in \eqref{eq:ah}.
  We consider the following auxiliary linear problem: For all $1\le n\le N$, find $(\uvy[h]^n, p_h^n)\in\UhD\times\Ph$ 
  such that 
  \begin{equation}\label{eq:nl_biot.h:lin}
    \begin{aligned}
      a_h^{\rm lin}(\uvy[h]^n,\uvv[h])+b_h(\uvv[h],p_h^n) &= (\ovf^n,\vv[h])_{\Omega} \qquad\forall\uvv[h]\in\UhD,
      \\
      C_0(\ddt p_h^n, q_h)_{\Omega} -b_h(\ddt\uvy[h]^n,q_h) + c_h(p_h^n,q_h) &= (\overline{g}^n,q_h)_{\Omega}
      \qquad\forall q_h\in\Ph,
    \end{aligned}
  \end{equation}
  with initial condition as in \eqref{eq:nl_biot.h:initial}.
  Since the previous system is linear and square and its solution satisfies the a priori estimate of 
  Proposition \ref{pro:a-priori}, it is readily inferred that problem \eqref{eq:nl_biot.h:lin} admits a unique solution.
  
  Now we observe that, thanks to the norm equivalence~\eqref{eq:norm_equivalence}, $a_h^{\rm lin}(\cdot,\cdot)$ is 
  a scalar product on $\UhD$, and we define the mapping $\underline{\vec{\Phi}}_h:\UhD\to\UhD$ such that, 
  for all $\uvv[h]\in\UhD$, 
  $$
  a_h^{\rm lin}(\underline{\vec{\Phi}}_h(\uvv[h]),\uvw[h])=a_h(\uvv[h],\uvw[h]),\qquad\forall\uvw[h]\in\UhD.
  $$
  We want to show that $\underline{\vec{\Phi}}_h$ is an isomorphism.
  Let $\uvv[h],\uvz[h]\in\UhD$ be such that $\underline{\vec{\Phi}}_h(\uvv[h])=\underline{\vec{\Phi}}_h(\uvz[h])$. If $\uvv[h]\neq\uvz[h]$, owing to the norm 
  equivalence~\eqref{eq:norm_equivalence} and the fact that $\norm[\strain,h]{\cdot}$ is a norm on $\UhD$, there is at 
  least one $T\in\Th$ such that 
  $\GTs\uvv[T]\neq\GTs\uvz[T]$ or $\vec{\varDelta}_{TF}^k\uvv[T]\neq\vec{\varDelta}_{TF}^k\uvz[T]$ for some $F\in\Fh[T]$.
  In both cases, owing to the definition of $a_h$ and 
  the strict monotonicity assumption~\eqref{eq:hypo.monotonicity}, it holds
  $$
  0 < a_h(\uvv[h],\uvv[h]-\uvz[h])-a_h(\uvz[h],\uvv[h]-\uvz[h])
  = a_h^{\rm lin}(\underline{\vec{\Phi}}_h(\uvv[h])-\underline{\vec{\Phi}}_h(\uvz[h]),\uvv[h]-\uvz[h])=0.
  $$
  Thus, we infer by contradiction that $\uvv[h]=\uvz[h]$ and, as a result, $\underline{\vec{\Phi}}_h$ is injective.
  In order to prove that $\underline{\vec{\Phi}}_h$ is also onto, we recall the following result: If $(E,(\cdot,\cdot)_E)$ is a Euclidean 
  space and $\Psi:E\to E$ is a continuous map such that $\frac{(\Psi(x),x)_E}{\norm[E]{x}}\to+\infty$ 
  as $\norm[E]{x}\to+\infty$, then $\Psi$ is surjective.  
  Since $(\UhD,\,a_h^{\rm lin}(\cdot,\cdot))$ is a Euclidean space and the coercivity~\eqref{eq:hypo.coercivity} of 
  $\ms$ together with the definition of $\ms^{\rm lin}$ yield
  $a_h^{\rm lin}(\underline{\vec{\Phi}}_h(\uvv[h]),\uvv[h])\ge a_h^{\rm lin}(\uvv[h],\uvv[h])$ for all $\uvv[h]\in\UhD$, 
  we deduce that $\underline{\vec{\Phi}}_h$ is an isomorphism. 
  Let, for all $1\le n \le N$, $(\uvy[h]^n, p_h^n)\in\UhD\times\Ph$ be the solution to problem \eqref{eq:nl_biot.h:lin}. 
  By the surjectivity and injectivity of $\underline{\vec{\Phi}}_h$, for all $1\le n \le N$, there exists a unique 
  $\uvu[h]^n\in\UhD$ such that $\underline{\vec{\Phi}}_h(\uvu[h]^n)=\uvy[h]^n$. 
  By definition of $\underline{\vec{\Phi}}_h$ and $(\uvy[h]^n)_{1\le n \le N}$, $(\uvu[h]^n, p_h^n)_{1\le n \le N}$ is 
  therefore the unique solution of the discrete problem \eqref{eq:nl_biot.h}.
\end{proof}

%-------------------------------------------------------------------------------------------------%

\section{Convergence analysis}\label{sec:convergence}

In this section we study the convergence of problem~\eqref{eq:nl_biot.h} and prove optimal error estimates under the following additional assumptions on the stress-strain function $\ms$.
\begin{assumption}[Stress-strain relation II]
  \label{ass:hypo_add}
There exist real numbers $C_{\rm lp},C_{\rm mn}\in(0,+\infty)$ such that, for a.e. $\vec{x}\in\Omega$, and all 
$\matr{\tau},\matr{\eta}\in\Real^{d\times d}_\symm$,
\begin{subequations}
  \label{eq:hypo_add}
  \begin{alignat}{2} 
    \label{eq:hypo_add.lip}
    &\seminorm[d\times d]{\ms(\vec{x},\matr{\tau})-\ms(\vec x,\matr{\eta})}\le C_{\rm lp}
    \seminorm[d\times d]{\matr{\tau}-\matr{\eta}},
    &\quad &\text{(Lipschitz continuity)}
    \\ 
    \label{eq:hypo_add.smon}
    &\left(\ms(\vec{x},\matr{\tau})-\ms(\vec{x},\matr{\eta})\right):\left(\matr{\tau}-\matr{\eta}\right)\ge C_{\rm mn}^2
    \seminorm[d\times d]{\matr{\tau}-\vec\eta}^2.
    &\quad &\text{(strong monotonicity)}
  \end{alignat}
\end{subequations}
\end{assumption}
\begin{remark}[Lipschitz continuity and strong monotonocity]
It is readily seen, by taking $\matr{\eta}=\matr{0}$ in \eqref{eq:hypo_add}, that Lipschitz continuity and strong monotonicity imply respectively the growth and coercivity properties of Assumption \ref{ass:hypo}. Therefore, recalling \eqref{eq:const_rel1}, it is inferred that the constants appearing in \eqref{eq:hypo.growth}, \eqref{eq:hypo.coercivity}, \eqref{eq:hypo_add.lip}, and \eqref{eq:hypo_add.smon} satisfy
\begin{equation}
  \label{eq:const_rel2}
  C_{\rm mn}^2\le C_{\rm cv}^2 \le C_{\rm gr} \le C_{\rm lp}.
\end{equation}
It was proved in~\cite[Lemma 4.1]{Barrientos.Gatica.ea:02} that the stress-strain relation for the 
Hencky--Mises model is strongly monotone and Lipschitz-continuous. Also the isotropic damage model satisfies Assumption~\ref{ass:hypo_add} if the damage function in~\eqref{eq:Damage} is, for instance, such that
$$
D(\vec{x},|\matr{\tau}|)=1-(1+\seminorm[d\times d]{\tenf(\vec{x})\matr{\tau}})^{-\frac12}\quad \forall\vec{x}\in\Omega.
$$   
\end{remark} 
In order to prove a convergence rate of $(k+1)$ in space for both the displacement and pressure errors, we assume from this point on that the permeability tensor field $\diff$ is constant on $\Omega$, and that the following elliptic regularity holds (which is the case, e.g., when $\Omega$ is convex \cite{Grisvard:85,Mazya.Rossman:10}):
There is a real number $C_{\rm el}>0$ only depending on $\Omega$ such that, for all $\psi\in L^{2}_{0}(\Omega)$, the unique function $\zeta\in P$  solution of the homogeneous Neumann problem
$$
  -\DIV(\diff\GRAD\zeta)=\psi\quad\text{in $\Omega$},
  \qquad
  \diff\GRAD\zeta\cdot\normal=0\quad\text{on $\partial\Omega$},
$$
is such that 
\begin{equation}\label{eq:ell.reg}
  \norm[H^2(\Omega)]{\zeta}\le C_{\rm el}\ldiff^{-\frac12}\norm[\Omega]{\psi}.
\end{equation}

Let $(\uvu[h]^n, p_h^n)_{1\le n\le N}$ be the solution to~\eqref{eq:nl_biot.h}. We consider, for all $1\le n\le N$, the discrete error components defined as 
\begin{equation}
  \label{eq:err.comp}
  \uve[h]^n\coloneq\uvu[h]^n-\Ivh\ovu^n,\qquad
  \epsilon_h^n\coloneq p_h^n-\tph^n,
\end{equation}
where the global elliptic projection $\tph^n\in\Ph$ is defined as the solution to
$$
c_h(\tph^n,q_h)=c_h(\overline{p}^n,q_h)\quad\forall q_h\in\Ph \quad\text{ and }\quad
\int_\Omega\tph^n=\int_\Omega \overline{p}^n.
$$ 
Before proving the convergence of the scheme, we recall two preliminary approximation results for the projector $\Ivh$ and the projection $\tph^n$ that have been proved in~\cite[Theorem~16]{Botti.Di-Pietro.Sochala:17} and \cite[Lemma 11]{Boffi.Botti.Di-Pietro:16}, respectively.
There is a strictly positive constant $C_{\rm pj}$ depending only on $\Omega$, $k$, and the mesh regularity parameter, such that, 
\begin{itemize}
   \item{Assuming \eqref{eq:hypo_add} and $\vu\in L^2(\vec{U}\cap\Hvec[k+2]{\Th})$ with 
   $\ms(\cdot,\GRADs\vu)\in L^2(\Hmats[k+1]{\Th})$, for a.e. $t\in (0,\tF)$ and all $\uvv[h]\in\UhD$, it holds
  \begin{multline}
    \label{eq:ah.consist}
    \left|a_h(\Ivh\vu(\cdot,t),\uvv[h])+(\DIV\ms(\cdot,\GRADs\vu(\cdot,t)),\vv[h])\right|
    \le 
    \\C_{\rm pj} h^{k+1}\left(
    \seminorm[{\Hvec[k+2]{\Th}}]{\vu(\cdot,t)}+\seminorm[{\Hmat[k+1]{\Th}}]{\ms(\cdot,\GRADs\vu(\cdot,t))}\right)
    \norm[\strain,h]{\uvv[h]}.
  \end{multline}}
  \item{Assuming the elliptic regularity~\eqref{eq:ell.reg} and $\overline{p}^n\in P\cap H^{k+1}(\Th)$, for 
  all $1\le n \le N$, it holds
  \begin{equation}
    \label{eq:tph.approx.L2}
    h\norm[c,h]{\tph^n-\overline{p}^n} + \ldiff^{\frac12}\norm[\Omega]{\tph^n-\overline{p}^n}
    \le C_{\rm pj} h^{k+1} \udiff^{\frac12}\seminorm[H^{k+1}(\Th)]{\overline{p}^n}.
  \end{equation}}
\end{itemize}
Now we have all the ingredients to estimate the discrete errors defined in \eqref{eq:err.comp}.
\begin{theorem}[Error estimate]\label{thm:err_est}
  Let $(\vu,p)$ denote the unique solution to~\eqref{eq:weak_form}, for which we assume
  $$
  \begin{aligned}
    \vu&\in H^1(\mathcal{T}_{\tau};\vec{U})\cap L^2(\Hvec[k+2]{\Th}),\qquad
    \ms(\cdot,\GRADs\vu)\in L^2(\Hmats[k+1]{\Th}),\\
    p&\in L^2(P\cap H^{k+1}(\Th)),
    \qquad\qquad\quad\;\;\phi\in H^1(\mathcal{T}_{\tau};L^2(\Omega)),
  \end{aligned}
  $$
  with $\phi=C_0 p + \DIV\vu$. If $C_0>0$, 
  we further assume $\lproj[\Omega]{0}p \in H^1(\mathcal{T}_{\tau};P^0(\Omega))\eqcolon H^1(\mathcal{T}_{\tau})$.
  Then, under Assumption \ref{ass:hypo_add} and the elliptic regularity~\eqref{eq:ell.reg}, it holds
  \begin{equation}
    \label{eq:err.est}
    \sum_{n=1}^N\tau\norm[\strain,h]{\uve[h]^n}^2
    + \sum_{n=1}^N\tau
    \left(\norm[\Omega]{\epsilon_h^n-\lproj[\Omega]{0}\epsilon_h^n}^2 + C_0\norm[\Omega]{\epsilon_h^n}^2\right)
    + \norm[c,h]{z_h^N}^2
    \le C \left(h^{2k+2}\mathcal{C}_1+\tau^2\mathcal{C}_2\right),
  \end{equation}
  where $C$ is a strictly positive constant independent of $h$, $\tau$, $C_0$, $\diff$, and $\tF$, and, for the sake of 
  brevity, we have defined $z_h^N\coloneq\sum_{n=1}^N\tau\epsilon_h^n$ and introduced the bounded quantities
  $$
  \begin{aligned}
    \mathcal{C}_1&\coloneq \seminorm[L^2({\Hvec[k+2]{\Th}})]{\vu}^2
    +\seminorm[L^2({\Hmat[k+1]{\Th}})]{\ms(\cdot,\GRADs\vu)}^2
    +(1+C_0)\frac{\udiff}{\ldiff}\seminorm[L^2(H^{k+1}(\Th))]{p}^2,
    \\
    \mathcal{C}_2&\coloneq 
    \norm[H^1(\mathcal{T}_{\tau};\Hvec{\Omega})]{\vu}^2+\norm[H^1(\mathcal{T}_{\tau};L^2(\Omega))]{\phi}^2
    +C_0\norm[H^1(\mathcal{T}_{\tau})]{\lproj[\Omega]{0}p}^2.
  \end{aligned}
  $$ 
\end{theorem}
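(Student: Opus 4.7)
The plan is to run an energy estimate on the error equations satisfied by $(\uve[h]^n,\epsilon_h^n)$, following the template of Proposition~\ref{pro:a-priori} but with consistency residuals on the right-hand side. First I would substitute $\uvu[h]^n=\uve[h]^n+\Ivh\ovu^n$ and $p_h^n=\epsilon_h^n+\tph^n$ into \eqref{eq:nl_biot.h:mech} and \eqref{eq:nl_biot.h:flow} and subtract the time-averaged continuous equations tested against the same discrete test functions. This produces three residuals on the right-hand side: a mechanical consistency residual controlled via \eqref{eq:ah.consist} applied pointwise in time and then integrated over $(t^{n-1},t^n)$, a Darcy residual that vanishes identically by the defining relation of the elliptic projection $\tph^n$, and a hydromechanical/time-discretization residual that is handled using the Fortin commuting property \eqref{eq:fortin.com}, the approximation properties \eqref{eq:tph.approx.L2} of $\tph^n$, and the time-averaging estimate \eqref{eq:time_approx}.

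Next, I would test the mechanical error equation with $\tau\uve[h]^n$, so that strong monotonicity \eqref{eq:hypo_add.smon} combined with the norm equivalence \eqref{eq:norm_equivalence} yields a coercive contribution proportional to $\tau\norm[\strain,h]{\uve[h]^n}^2$. In parallel, I would sum the fluid error equation over $1\le i\le n$ and test with $\tau\epsilon_h^n$, so that the $c_h$ contribution telescopes through $2x(x-y)=x^2+(x-y)^2-y^2$ into $\tfrac12(\norm[c,h]{z_h^n}^2+\norm[c,h]{\ddt z_h^n}^2-\norm[c,h]{z_h^{n-1}}^2)$, with $z_h^n=\sum_{i=1}^n\tau\epsilon_h^i$, while the storage contribution produces $C_0\tau\norm[\Omega]{\epsilon_h^n}^2$. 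Adding the two tested equations, the coupling terms $\pm\tau b_h(\uve[h]^n,\epsilon_h^n)$ cancel exactly; summing over $n$ collapses the telescoping sum to $\norm[c,h]{z_h^N}^2$, reproducing the left-hand side of \eqref{eq:err.est} save for the $\norm[\Omega]{\epsilon_h^n-\lproj[\Omega]{0}\epsilon_h^n}$ contribution.

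The right-hand side is then estimated piecewise. Spatial consistency, through \eqref{eq:ah.consist} for the nonlinear elasticity and \eqref{eq:tph.approx.L2} for the pressure projection, together with Cauchy--Schwarz and Young inequalities, yields the $h^{2k+2}\mathcal{C}_1$ contribution. Temporal consistency is handled by \eqref{eq:time_approx} applied separately to $\vu$, to the fluid content $\phi=C_0 p+\DIV\vu$ (which appears naturally when rearranging the discrete $\ddt$-terms of the fluid equation), and, if $C_0>0$, to $\lproj[\Omega]{0}p$ to account for the mean-value contribution not seen by the inf-sup argument, giving the $\tau^2\mathcal{C}_2$ term. To recover the $L^2$-control on the pressure error independently of $C_0$, I would apply the inf-sup inequality \eqref{eq:inf-sup} to $\epsilon_h^n-\lproj[\Omega]{0}\epsilon_h^n$ and bound the supremum through the mechanical error equation, exactly as in step (i) of the proof of Proposition~\ref{pro:a-priori}. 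Young inequalities then absorb the $\tau\norm[\strain,h]{\uve[h]^n}^2$ and $C_0\tau\norm[\Omega]{\epsilon_h^n}^2$ contributions on the right back into the left.

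The hard part will be reconciling the pointwise-in-time consistency \eqref{eq:ah.consist} with the fact that the scheme sees $\ms(\cdot,\GTs\Ivh\ovu^n)$ rather than the time average of $\ms(\cdot,\GTs\Ivh\vu(\cdot,t))$: since $\ms$ does not commute with time averaging, I would bridge the two by invoking Lipschitz continuity \eqref{eq:hypo_add.lip} together with \eqref{eq:time_approx} applied to $\vu\in H^1(\mathcal{T}_{\tau};\vec{U})$, which converts the first-order time-averaging error in the stress into the $\tau^2$ contribution of $\mathcal{C}_2$. A closely related subtlety in the case $C_0>0$ is that the inf-sup argument does not see the constant-in-space part $\lproj[\Omega]{0}\epsilon_h^n$; controlling its time average through a test against $q_h\equiv 1$ in the fluid error equation and \eqref{eq:time_approx} is precisely what forces the $C_0\norm[H^1(\mathcal{T}_{\tau})]{\lproj[\Omega]{0}p}^2$ term in $\mathcal{C}_2$.
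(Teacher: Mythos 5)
Your proposal is correct and follows essentially the same route as the paper's proof: error equations for $(\uve[h]^n,\epsilon_h^n)$, a combined energy argument with the summed pressure error $z_h^n$ and the telescoping identity, strong monotonicity plus the norm equivalence \eqref{eq:norm_equivalence} for the displacement error, the inf-sup condition \eqref{eq:inf-sup} with Lipschitz continuity for the zero-mean pressure error, the consistency results \eqref{eq:ah.consist} and \eqref{eq:tph.approx.L2} for the $h^{2k+2}$ terms, and time-averaging estimates for the $\tau^2$ terms; in particular, your ``hard part'' is exactly the paper's residual $\mathcal{R}_1^n$, treated via \eqref{eq:hypo_add.lip}, the Fortin stability \eqref{eq:fortin.st}, and a Poincar\'e--Wirtinger inequality in time. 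The only cosmetic deviations are that the paper produces the $C_0\norm[H^1(\mathcal{T}_{\tau})]{\lproj[\Omega]{0}p}^2$ contribution by inserting $\pm\overline{p}^i$ and regrouping through the fluid content $\phi$ rather than by testing with $q_h\equiv 1$, and that the $\mathcal{R}_1^n$-type term needs the Poincar\'e--Wirtinger inequality on $(t^{n-1},t^n)$ rather than \eqref{eq:time_approx} verbatim.
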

\begin{remark}[Time regularity]
\label{rem:time_reg2}
In order to prove the previous error estimate, we only require the displacement $\vu$ and the fluid content $\phi$ solving problem~\eqref{eq:weak_form} to be piecewise $H^1$-regular in $(0,\tF)$, 
whereas \cite[Theorem 12]{Boffi.Botti.Di-Pietro:16} is established under the much stronger regularity 
$\vu\in C^2(\Hvec{\Omega})$ and, if $C_0>0$, $p\in C^2(L^2(\Omega))$. 
Moreover, the assumptions $\phi\in H^1(\mathcal{T}_{\tau};L^2(\Omega))$ and, 
if $C_0>0$, $\lproj[\Omega]{0}p \in H^1(\mathcal{T}_{\tau})$ are consistent with the time regularity results observed in Remark \ref{rem:reg_porosity.press_average}.
\end{remark}
\begin{proof}
(i) \emph{Estimate of $\norm[\strain,h]{\uve[h]^n}^2$.} First we observe that, owing to \eqref{eq:nl_biot.strong.mech} and the definition of $b_h$ given in \eqref{eq:bh}, for all $\uvv[h]\in\UhD$ and all $1\le n\le N$, we have
\begin{equation}
  \label{eq:rhs.mech}
    (\ovf^n,\vv[h])_\Omega = -(\DIV\oms^n(\cdot,\GRADs\vu),\vv[h])_\Omega + (\GRAD \overline{p}^n,\vv[h])_\Omega
    = a_h(\Ivh\ovu^n,\uvv[h]) + b_h(\uvv[h], \tph^n) - \mathcal{R}^n(\uvv[h]),                      
\end{equation}
where the residual linear form $\mathcal{R}^n:\UhD\to\Real$ is defined such that, for all $\uvv[h]\in\UhD$,
\begin{equation}
  \label{eq:res.fun}
  \mathcal{R}^n(\uvv[h])\coloneq 
  a_h(\Ivh\ovu^n,\uvv[h])+ (\DIV\oms^n(\cdot,\GRADs\vu),\vv[h])_\Omega,
  +b_h(\uvv[h], \tph^n) - (\GRAD \overline{p}^n,\vv[h])_\Omega.
\end{equation}
Using the norm equivalence~\eqref{eq:norm_equivalence}, the strong monotonicity \eqref{eq:hypo_add.smon} of $\ms$ along with assumption \eqref{eq:penalty_mech} on the stabilization parameter, the discrete mechanical equilibrium \eqref{eq:nl_biot.h:mech}, and~\eqref{eq:rhs.mech}, yields 
$$
  \begin{aligned}
    \frac{C_{\rm mn}^{2}}{C_{\rm eq}^{2}}\norm[\strain,h]{\uve[h]^n}^2 
    &=C_{\rm mn}^{2}\sum_{T\in\Th}\left(\norm[T]{\GTs\uve[T]^n}^2 
     +\sum_{F\in\Fh[T]}\frac{1}{h_F}\norm[F]{\vec{\varDelta}_{TF}^k\uve[T]^n}^2\right)
    \\
    &\le a_h(\uvu[h]^n,\uve[h]^n) - a_h(\Ivh\ovu^n,\uve[h]^n) 
    \\
    &= (\ovf^n,\vec{e}_h^n)_\Omega - b_h(\uve[h]^n, p_h^n) - a_h(\Ivh\ovu^n,\uve[h]^n)
    = -b_h(\uve[h]^n, \epsilon_h^n) 
    - \mathcal{R}^n(\uve[h]^n). 
  \end{aligned}
  $$
  Thus, owing to the previous relation and defining the dual norm
    $$\norm[\strain,h,*]{\mathcal{R}^n}\coloneq\sup_{\uvv[h]\in\UhD\setminus\{\underline{\vec{0}}\}}
    \frac{\mathcal{R}^n(\uvv[h]^n)}{\norm[\strain,h]{\uvv[h]^n}},$$ we have that
    $$
    \frac{C_{\rm mn}^2}{C_{\rm eq}^2}\norm[\strain,h]{\uve[h]^n}^2 + b_h(\uve[h]^n, \epsilon_h^n) 
    \le\norm[\strain,h,*]{\mathcal{R}^n}\norm[\strain,h]{\uve[h]^n}
    \le
    \frac{C_{\rm eq}^2}{2 C_{\rm mn}^2} \norm[\strain,h,*]{\mathcal{R}^n}^2
    + \frac{C_{\rm mn}^2}{2 C_{\rm eq}^2}\norm[\strain,h]{\uve[h]^n}^2,
    $$
    where the conclusion follows from Young's inequality. 
    Hence, rearranging, we arrive at
    \begin{equation}
      \label{eq:esterr.displacement}
      \frac{C_{\rm mn}^2}{2 C_{\rm eq}^2}\norm[\strain,h]{\uve[h]^n}^2 + b_h(\uve[h]^n, \epsilon_h^n) 
      \le
      \frac{C_{\rm eq}^2}{2 C_{\rm mn}^2} \norm[\strain,h,*]{\mathcal{R}^n}^2
    \end{equation}
  \\
  (ii) \emph{Estimate of $C_0\norm[\Omega]{\epsilon_h^n}^2$.}
Using \eqref{eq:nl_biot.strong.flow}, the fact that $(\DIV\vec{u}(t),1)_\Omega=0$ to insert $\lproj[\Omega]{0}q_h$, and 
the consistency property~\eqref{eq:ch.consist}, we infer that, for all $q_h\in\Ph$ and all $1\le i\le N$,
\begin{equation}
  \label{eq:rhs.flow}
  \begin{aligned}
  (\overline{g}^i, q_h)_\Omega 
  &= (C_0 \overline{\dt p}^i, q_h)_\Omega + (\DIV(\overline{\dt\vu}^i), q_h-\lproj[\Omega]{0}q_h)_\Omega 
  - (\DIV(\diff\GRAD\overline{p}^i), q_h)_\Omega
  \\
  &=\tau^{-1}\int_{t^{i-1}}^{t^i}\dt\left[C_0(p(t), q_h)_\Omega+(\DIV\vu(t),q_h-\lproj[\Omega]{0}q_h)_\Omega\right]{\rm d}t
  + c_h(\overline{p}^i, q_h)
  \\
  &= \ddt\left[C_0(p^i,q_h)_\Omega + (\DIV\vu^i,q_h-\lproj[\Omega]{0}q_h)_\Omega\right] + c_h(\overline{p}^i, q_h).
  \end{aligned}
\end{equation}
Therefore, using the discrete mass conservation equation \eqref{eq:nl_biot.h:flow}, \eqref{eq:bh(vh,1)=0}, the Fortin  property \eqref{eq:fortin.com} , the definition of the elliptic projection $\tph^n$, and \eqref{eq:rhs.flow}, we obtain
\begin{equation}
  \label{eq:bnd.tph.i}
  \begin{aligned}
  C_0(\ddt \epsilon_h^i, q_h)_\Omega &- b_h(\ddt\uve[h]^i, q_h)+ c_h(\epsilon_h^i, q_h)
  \\
  &= (\overline{g}^i, q_h)_\Omega - C_0 (\ddt\tph^i, q_h)_\Omega 
     + b_h(\ddt(\Ivh\ovu^i), q_h-\lproj[\Omega]{0}q_h) - c_h(\tph^i, q_h)
  \\
  &= (\overline{g}^i, q_h)_\Omega- \ddt\left[C_0(\tph^i, q_h)_\Omega
     -(\DIV\ovu^i, q_h-\lproj[\Omega]{0}q_h)_\Omega \right]-c_h(\overline{p}^i, q_h)
  \\
  &= \ddt \left[C_0(p^i-\tph^i, q_h)_\Omega + (\DIV\vu^i-\DIV\ovu^i, q_h-\lproj[\Omega]{0}q_h)_\Omega \right]
  \\
  &= \ddt \left[ C_0(\overline{p}^i-\tph^i, q_h)_\Omega + C_0(\lproj[\Omega]{0}(p^i-\overline{p}^i), q_h)_\Omega
     + (\phi^i-\overline{\phi}^i, q_h-\lproj[\Omega]{0}q_h)_\Omega \right],
  \end{aligned}
\end{equation}
where, in order to pass to the last line, we have inserted $\pm\overline{p}^i$ into the first term inside brackets in the 
third line, we have defined, according to \eqref{eq:nl_biot.strong:initial}, $\phi^i\coloneq C_0 p^i + \DIV\vu^i$ for all 
$0\le i\le N$, and we have used the definition of the global $L^2$-projector $\lproj[\Omega]{0}$. Moreover, setting
$\tph^0\coloneq0$, it follows from the initial condition \eqref{eq:nl_biot.h:initial}, the boundary condition \eqref{eq:nl_biot.strong:bc.u}, and \eqref{eq:bh(vh,1)=0} that
\begin{equation}
  \label{eq:bnd.tph.initial}
  C_0(\epsilon_h^0, q_h)_\Omega - b_h(\uve[h]^0, q_h) 
  = (\phi^0, q_h)_\Omega
  = (\phi^0, q_h-\lproj[\Omega]{0}q_h)_\Omega+C_0(\lproj[\Omega]{0}p^0, q_h)_\Omega.
\end{equation}
For all $1\le n\le N$, summing \eqref{eq:bnd.tph.i} for $1\le i\le n$ with the choice $q_h=\tau\epsilon_h^n$, using \eqref{eq:bnd.tph.initial}, and proceeding as in the second step of the proof of Proposition \ref{pro:a-priori}, leads to 
\begin{multline}
  \label{eq:bnd.tph.n}
  C_0\norm[\Omega]{\epsilon_h^n}^2 - b_h(\uve[h]^n, \epsilon_h^n) + \frac{1}{2\tau} 
  \left(\norm[c,h]{z_h^n}^2 - \norm[c,h]{z_h^{n-1}}^2 \right)
  \\
  \le C_0(\overline{p}^n-\tph^n, \epsilon_h^n)_\Omega + C_0(\lproj[\Omega]{0}(p^n-\overline{p}^n), \epsilon_h^n)_\Omega
     + (\phi^n-\overline{\phi}^n, \epsilon_h^n-\lproj[\Omega]{0}\epsilon_h^n)_\Omega,
\end{multline}
where $z_h^n\coloneq\sum_{i=1}^n \tau\epsilon_h^i$ if $n\ge 1$ and $z_h^0\coloneq0$. We bound the first term in the right-hand side of \eqref{eq:bnd.tph.n} applying the Cauchy--Schwarz and Young inequalities followed by the approximation result \eqref{eq:tph.approx.L2}, yielding
\begin{equation}
  \label{eq:bnd.tph.t1}
  \begin{aligned}
  C_0(\overline{p}^n-\tph^n, \epsilon_h^n)_\Omega &\le  
  C_{\rm pj}^2 h^{2(k+1)}\frac{\udiff}{\ldiff} C_0 \seminorm[H^{k+1}(\Th)]{\overline{p}^n}^2
  +\frac{C_0}{4}\norm[\Omega]{\epsilon_h^n}^2
  \\
  &\le
  C_{\rm pj}^2\frac{h^{2(k+1)}}{\tau}\left(\frac{\udiff}{\ldiff}\right) C_0\seminorm[L^2((t^{n-1},t^n);H^{k+1}(\Th))]{p}^2
  +\frac{C_0}{4}\norm[\Omega]{\epsilon_h^n}^2,
  \end{aligned}
\end{equation}
where, in order to pass to the second line, we have used the Cauchy--Schwarz inequality and adopted the notation 
$\seminorm[L^2((t^{n-1},t^n);H^{m}(\Th))]{\cdot}\coloneq \norm[L^2((t^{n-1},t^n))]{\seminorm[H^{m}(\Th)]{\cdot}}$, 
for any $m\in\Natural$.
We estimate the second and third terms using the Cauchy--Schwarz and the Young inequalities together with the time approximation result \eqref{eq:time_approx} as follows:
\begin{equation}
  \label{eq:bnd.tph.t2}
  \begin{aligned}
  C_0(\lproj[\Omega]{0}(p^n-\overline{p}^n), \epsilon_h^n)_\Omega
  &\le C_0\tau\norm[H^1((t^{n-1},t^n))]{\lproj[\Omega]{0}p}^2 + \frac{C_0}{4}\norm[\Omega]{\epsilon_h^n}^2,
  \\
  (\phi^n-\overline{\phi}^n, \epsilon_h^n-\lproj[\Omega]{0}\epsilon_h^n)_\Omega
  &\le \eta\tau\norm[H^1((t^{n-1},t^n);L^2(\Omega))]{\phi}^2 
  + \frac1{4\eta}\norm[\Omega]{\epsilon_h^n-\lproj[\Omega]{0}\epsilon_h^n}^2,
  \end{aligned}
\end{equation}
with $\eta$ denoting a positive real number that will be fixed later on in the proof. The relation obtained by plugging \eqref{eq:bnd.tph.t1} and \eqref{eq:bnd.tph.t2} into \eqref{eq:bnd.tph.n} reads
\begin{multline}
  \label{eq:bnd.tph.final}
  \hspace{-2mm}
  \frac{C_0}{2}\norm[\Omega]{\epsilon_h^n}^2 - b_h(\uve[h]^n, \epsilon_h^n) + 
  \frac{1}{2\tau}\left(\norm[c,h]{z_h^n}^2 - \norm[c,h]{z_h^{n-1}}^2 \right)
  - \frac1{4\eta}\norm[\Omega]{\epsilon_h^n-\lproj[\Omega]{0}\epsilon_h^n}^2\le
  \\ \;
  C_{\rm pj}^2\frac{h^{2(k+1)}}{\tau}\left(\frac{\udiff}{\ldiff}\right)C_0\seminorm[L^2((t^{n-1},t^n);H^{k+1}(\Th))]{p}^2
  + C_0\tau\norm[H^1((t^{n-1},t^n))]{\lproj[\Omega]{0}p}^2
  + \eta\tau\norm[H^1((t^{n-1},t^n);L^2(\Omega))]{\phi}^2. 
\end{multline}
\\
(iii) \emph{Estimate of $\norm[\Omega]{\epsilon_h^n-\lproj[\Omega]{0}\epsilon_h^n}^2$.}
We proceed as in the first step of the proof of Proposition \ref{pro:a-priori}.
Using the inf-sup condition~\eqref{eq:inf-sup}, \eqref{eq:bh(vh,1)=0} followed by the definition \eqref{eq:err.comp} of the pressure error, the linearity of $b_h$, the mechanical equilibrium equation~\eqref{eq:nl_biot.h:mech}, and \eqref{eq:rhs.mech}, we get, for all $1\le n\le N$,
\begin{equation}
  \label{eq:bnd.pressure.err1}
  \begin{aligned}
    \norm[\Omega]{\epsilon_h^n-\lproj[\Omega]{0}\epsilon_h^n}
    &\le \beta
    \sup_{\uvv[h]\in\UhD\setminus\{\underline{\vec{0}}\}}
    \frac{b_h(\uvv[h],\epsilon_h^n-\lproj[\Omega]{0}\epsilon_h^n)}{\norm[\strain,h]{\uvv[h]}}
    \\
    &=\beta\sup_{\uvv[h]\in\UhD\setminus\{\underline{\vec{0}}\}}
    \frac{b_h(\uvv[h],p_h^n-\tph^n)}{\norm[\strain,h]{\uvv[h]}
    }
    \\
    &= \beta\sup_{\uvv[h]\in\UhD\setminus\{\underline{\vec{0}}\}}
    \frac{(\ovf^n,\vv[h])_\Omega-a_h(\uvu[h]^n,\uvv[h])-b_h(\uvv[h],\tph^n)}
    {\norm[\strain,h]{\uvv[h]}}
    \\
    &= \beta\sup_{\uvv[h]\in\UhD\setminus\{\underline{\vec{0}}\}}
    \frac{a_h(\Ivh\ovu^{n},\uvv[h]) -a_h(\uvu[h]^n,\uvv[h]) - \mathcal{R}^n(\uvv[h])}
    {\norm[\strain,h]{\uvv[h]}}. 
  \end{aligned}
\end{equation}
Moreover, the Lipschitz continuity of the stress-strain function~\eqref{eq:hypo_add.lip}, the Cauchy--Schwarz inequality, assumption \eqref{eq:penalty_mech} on the stabilization parameter $\gamma$ together with \eqref{eq:const_rel2}, and the second inequality in~\eqref{eq:norm_equivalence}, lead to  
\begin{equation}
  \label{eq:bnd_wLipschitz}
  \begin{aligned}
    a_h&(\Ivh\ovu^{n},\uvv[h]) - a_h(\uvu[h]^n,\uvv[h])
    \\ &=
    \sum_{T\in\Th}\left(\int_T{(\ms(\cdot,\GTs\IvT\ovu^{n}) - \ms(\cdot,\GTs\uvu[T]^n)):\GTs\uvv[T]} 
    +\sum_{F\in\Fh[T]}
    \frac{\gamma}{h_F}\int_F\vec{\varDelta}_{TF}^k(\IvT\ovu^{n}-\uvu[T]^n)\cdot\vec{\varDelta}_{TF}^k\uvv[T]\right)
    \\ &\le C_{\rm lp}C_{\rm eq}^2 
    \norm[\strain,h]{\uve[h]^n}\norm[\strain,h]{\uvv[h]}.
  \end{aligned}
\end{equation}
Therefore, plugging the previous bound into the last line of \eqref{eq:bnd.pressure.err1}, yields 
$$
  \norm[\Omega]{\epsilon_h^n-\lproj[\Omega]{0}\epsilon_h^n}\le
  \beta C_{\rm lp} C_{\rm eq}^2 \norm[\strain,h]{\uve[h]^n}
  +\beta \norm[\strain,h,*]{\mathcal{R}^n}.
$$
Squaring and rearranging the previous relation and recalling that, owing to \eqref{eq:const_rel2}, 
$C_{\rm mn}^2\le C_{\rm lp}$, it is inferred that
\begin{equation}
  \label{eq:bnd.pressure.err2}
  \frac{\norm[\Omega]{\epsilon_h^n-\lproj[\Omega]{0}\epsilon_h^n}^2}{2\beta^2 C_{\rm mn}^{-2} C_{\rm lp}^2 C_{\rm eq}^6}
  \le\frac{C_{\rm mn}^2}{C_{\rm eq}^2} \norm[\strain,h]{\uve[h]^n}^2
  +\frac{\norm[\strain,h,*]{\mathcal{R}^n}^2}{C_{\rm mn}^2 C_{\rm eq}^6}.
\end{equation}
\\
(iv) \emph{Estimate of the dual norm of the residual.}
We split the residual linear form $\mathcal{R}^n$ defined in \eqref{eq:res.fun} into three contributions 
$\mathcal{R}^n \coloneq \mathcal{R}_{1}^n+\mathcal{R}_{2}^n+\mathcal{R}_{3}^n$, defined, for all $\uvv[h]\in\UhD$ and all $1\le n\le N$, such that
\begin{subequations}
  \label{eq:res.funs}
  \begin{alignat}{3} 
    \label{eq:res.fun.R1}
    \mathcal{R}_{1}^n(\uvv[h]) &\coloneq 
    a_h(\Ivh\ovu^n,\uvv[h]) - \frac1\tau \int_{t^{n-1}}^{t^n} a_h(\Ivh\vu(t),\uvv[h])\ \ud t,
    \\ 
    \label{eq:res.fun.R2}
    \mathcal{R}_{2}^n(\uvv[h]) &\coloneq 
    (\DIV\oms^n(\cdot,\GRADs\vu),\vv[h])_\Omega+\frac1\tau \int_{t^{n-1}}^{t^n} a_h(\Ivh\vu(t),\uvv[h])\ \ud t,
    \\ 
    \label{eq:res.fun.R3}
    \mathcal{R}_{3}^n(\uvv[h]) &\coloneq 
    b_h(\uvv[h], \tph^n) - (\GRAD \overline{p}^n,\vv[h])_\Omega.
  \end{alignat}
\end{subequations}
The first contribution can be bounded proceeding as in~\eqref{eq:bnd_wLipschitz}, then using the stability property \eqref{eq:fortin.st} of the interpolator $\Ivh$, the Cauchy--Schwarz inequality, and a Poincar\'{e}--Wirtinger inequality on the time interval $(t^{n-1},t^n)$. By doing so, we get
\begin{equation}\label{eq:bnd_Res1}
\begin{aligned}
\mathcal{R}_{1}^n(\uvv[h]) &= \frac1\tau \int_{t^{n-1}}^{t^n} a_h(\Ivh\ovu^n,\uvv[h]) - a_h(\Ivh\vu(t),\uvv[h])\ \ud t
\\
&\le \frac1\tau \int_{t^{n-1}}^{t^n} 
\left( C_{\rm lp}C_{\rm eq}^2 \norm[\strain,h]{\Ivh(\ovu^n -\vu(t))}\norm[\strain,h]{\uvv[h]} \right) \ud t
\\
&\le \frac{C_{\rm lp}C_{\rm eq}^2 C_{\rm st}}{\tau} \norm[\strain,h]{\uvv[h]} 
\int_{t^{n-1}}^{t^n}\norm[1,\Omega]{\ovu^n -\vu(t)} \ud t
\\
&\le \frac{C_{\rm lp}C_{\rm eq}^2 C_{\rm st}}{\sqrt{\tau}} \norm[\strain,h]{\uvv[h]} 
\norm[L^2((t^{n-1},t^n);\Hvec{\Omega})]{\ovu^n -\vu}
\\
&\le C_{\rm lp}C_{\rm eq}^2 C_{\rm st} C_{\rm ap} \sqrt{\tau} \norm[\strain,h]{\uvv[h]} 
\norm[H^1((t^{n-1},t^n);\Hvec{\Omega})]{\vu}.
\end{aligned}
\end{equation}
We estimate the residual linear form $\mathcal{R}_2^n$ defined in \eqref{eq:res.fun.R2} using the consistency property \eqref{eq:ah.consist} and the Cauchy--Schwarz inequality, obtaining
\begin{equation}\label{eq:bnd_Res2}
\begin{aligned}
\mathcal{R}_{2}^n(\uvv[h]) &= 
\frac1\tau \int_{t^{n-1}}^{t^n} \left(\DIV\ms(\cdot,\GRADs\vu(t)),\uvv[h]\right)_\Omega + a_h(\Ivh\vu(t),\uvv[h])\ \ud t
\\
&\le C_{\rm pj}\frac{h^{k+1}}\tau \norm[\strain,h]{\uvv[h]} \int_{t^{n-1}}^{t^n} 
\left( \seminorm[{\Hvec[k+2]{\Th}}]{\vu(t)}+\seminorm[{\Hmat[k+1]{\Th}}]{\ms(\cdot,\GRADs\vu(t))}\right) \ud t
\\
&\le C_{\rm pj}\frac{h^{k+1}}{\sqrt{\tau}} \norm[\strain,h]{\uvv[h]} 
\left( \seminorm[L^2((t^{n-1},t^n);{\Hvec[k+2]{\Th}})]{\vu}
+\seminorm[L^2((t^{n-1},t^n);{\Hmat[k+1]{\Th}})]{\ms(\cdot,\GRADs\vu)}\right).
\end{aligned}
\end{equation}
Finally, the third term in \eqref{eq:res.funs} can be bounded integrating by parts element-wise and using the Cauchy--Schwarz inequality, the trace inequality \eqref{eq:continuous_trace}, the consistency property \eqref{eq:tph.approx.L2}, and again the Cauchy--Schwarz inequality, namely
\begin{equation}\label{eq:bnd_Res3}
\begin{aligned}
\mathcal{R}_{3}^n(\uvv[h]) 
     &\le \sum_{T\in\Th}\int_T \DIV\vv[T](\overline{p}^n - \tph^n) 
     + \sum_{F\in\Fh[T]}\int_F (\vv[F]-\vv[T])\cdot (\overline{p}^n-\tph^n)\normal_{TF}
     \\
     &\le C_{\rm pj} h^{k+1}
    \left(\frac{\udiff}{\ldiff}\right)^{\frac12}
     \seminorm[H^{k+1}(\Th)]{\overline{p}^n}\norm[\strain,h]{\uvv[h]}
     \\
     &\le C_{\rm pj}\frac{h^{k+1}}{\sqrt{\tau}}  \norm[\strain,h]{\uvv[h]}
     \left(\frac{\udiff}{\ldiff}\right)^{\frac12} \seminorm[L^2((t^{n-1},t^n);H^{k+1}(\Th))]{p}.
\end{aligned}
\end{equation}
Therefore, combining \eqref{eq:bnd_Res1}, \eqref{eq:bnd_Res2}, and \eqref{eq:bnd_Res3}, it is inferred that
\begin{equation}
  \label{eq:bnd_Res.final}
\norm[\strain,h,*]{\mathcal{R}^n}^2
= \norm[\strain,h,*]{\mathcal{R}_1^n + \mathcal{R}_2^n + \mathcal{R}_3^n}^2
\le 4 C_{\rm lp}^2 C_{\rm eq}^4 C_{\rm st}^2 C_{\rm ap}^2 \tau \norm[H^1((t^{n-1},t^n);\Hvec{\Omega})]{\vu}^2
+4 C_{\rm pj}^2 \tau^{-1} h^{2(k+1)} \widetilde{\mathcal{C}}_1^n,
\end{equation}
with
$$
\widetilde{\mathcal{C}}_1^n\coloneq
\seminorm[L^2((t^{n-1},t^n);{\Hvec[k+2]{\Th}})]{\vu}^2
+\seminorm[L^2((t^{n-1},t^n);{\Hmat[k+1]{\Th}})]{\ms(\cdot,\GRADs\vu)}^2
+\left(\frac{\udiff}{\ldiff}\right) \seminorm[L^2((t^{n-1},t^n);H^{k+1}(\Th))]{p}^2.
$$
\\
(v) \emph{Conclusion.}
Adding~\eqref{eq:esterr.displacement} to~\eqref{eq:bnd.tph.final} with $\eta=4\beta^2 C_{\rm mn}^{-2} C_{\rm lp}^2 
C_{\rm eq}^6$, using \eqref{eq:bnd.pressure.err2} and \eqref{eq:bnd_Res.final}, summing the resulting equation over $1\le n \le N$, and multiplying both sides by $2\tau$, we obtain
\begin{equation}
  \label{eq:err.eq.sum}
    \sum_{n=1}^N\tau \left(\frac{C_{\rm mn}^2}{2 C_{\rm eq}^2}\norm[\strain,h]{\uve[h]^n}^2
    + C_0\norm[\Omega]{\epsilon_h^n}^2 
    +\frac{\norm[\Omega]{\epsilon_h^n-\lproj[\Omega]{0}\epsilon_h^n}^2}{8\beta^2C_{\rm mn}^{-2}C_{\rm lp}^2C_{\rm eq}^6}
    \right) + \norm[c,h]{z_h^N}^2
    \le \widetilde{C} \left(h^{2(k+1)}\mathcal{C}_1+\tau^2 \mathcal{C}_2\right),
\end{equation}
with $\widetilde{C}\coloneq\max\left\{1,\, C_{\rm pj}^2,
\, 2 C_{\rm pj}^2 C_{\rm mn}^{-2}(2 C_{\rm eq}^2 + C_{\rm eq}^{-6}),
\, 2 C_{\rm lp}^2 C_{\rm st}^2 C_{\rm ap}^2 (2 C_{\rm eq}^6 + C_{\rm eq}^{-2}),
\,4\beta^2 C_{\rm lp}^2 C_{\rm eq}^6 C_{\rm mn}^{-2}\right\}$ and
$$
  \begin{aligned}
    \mathcal{C}_1 &\coloneq \sum_{n=1}^N
    \left(\widetilde{\mathcal{C}}_1^n+C_0\frac{\udiff}{\ldiff}\seminorm[L^2((t^{n-1},t^n);H^{k+1}(\Th))]{p}^2\right),
    \\
    &=\seminorm[L^2({\Hvec[k+2]{\Th}})]{\vu}^2+\seminorm[L^2({\Hmat[k+1]{\Th}})]{\ms(\cdot,\GRADs\vu)}^2
    +(1+C_0)\frac{\udiff}{\ldiff}\seminorm[L^2(H^{k+1}(\Th))]{p}^2
    \\
    \mathcal{C}_2 &\coloneq \sum_{n=1}^N\left(
    \norm[H^1((t^{n-1},t^n);\Hvec{\Omega})]{\vu}^2
    +\norm[H^1((t^{n-1},t^n);L^2(\Omega))]{\phi}^2
    +C_0\norm[H^1((t^{n-1},t^n))]{\lproj[\Omega]{0}p}^2\right)
    \\
    &= \norm[H^1(\mathcal{T}_{\tau};\Hvec{\Omega})]{\vu}^2+\norm[H^1(\mathcal{T}_{\tau};L^2(\Omega))]{\phi}^2
    +C_0\norm[H^1(\mathcal{T}_{\tau})]{\lproj[\Omega]{0}p}^2.
  \end{aligned}
$$ 
Finally, multiplying both sides of \eqref{eq:err.eq.sum} by $2(C_{\rm pj}^{-2}+1)\widetilde{C}$ yields  \eqref{eq:err.est} with $C=2(C_{\rm pj}^{-2}+1)\widetilde{C}^2$.
\end{proof}

%--------------------------------------------------------------------------------------------------------------

\section{Numerical results}\label{sec:num.res}

\begin{figure}
  \centering
    \includegraphics[height=4cm]{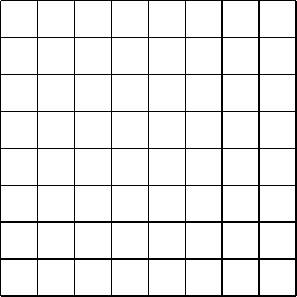}
    \hspace{1cm}
    \includegraphics[height=4cm]{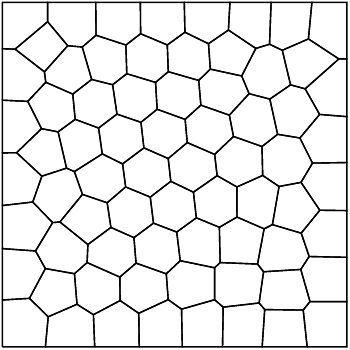}
  \caption{Cartesian and Voronoi meshes for the numerical tests.\label{fig:meshes}}
\end{figure}

We consider a regular exact solution in order to assess the convergence of the method. 
Specifically, we solve problem~\eqref{eq:nl_biot.strong} in the square domain $\Omega=(0,1)^{2}$ with $\tF=1$ and physical 
parameters $C_0=0$ and $\diff=\Id$. As nonlinear constitutive law we take the Hencky--Mises relation given by
$$
  \ms(\GRADs\vu)=\left(1+\exp^{-\opdev(\GRADs\vu)}\right)\optr(\GRADs\vu)\Id+\left(4-2\exp^{-\opdev(\GRADs\vu)}\right)\GRADs\vu.
$$
It can be checked that the previous stress-strain relation satisfies Assumption~\ref{ass:hypo}. The exact displacement $\vu$ and exact pressure $p$ are given by
$$
\begin{aligned}
  \vu(\vec{x},t) &= t^2\big(\sin(\pi x_1)\sin(\pi x_2), \;\sin(\pi x_1)\sin(\pi x_2)\big),
  \\
  p(\vec{x},t) &= -\pi^{-1}t(\sin(\pi x_1)\cos(\pi x_2) + \cos(\pi x_1)\sin(\pi x_2)).
\end{aligned}
$$
The volumetric load $\vf$, the source term $g$, and the boundary conditions are inferred from the exact solution.
We consider the Cartesian and Voronoi mesh families depicted in Figure~\ref{fig:meshes} and polynomial degree $k=1$.
The time step $\tau$ on the coarsest mesh is taken to be $\pgfmathprintnumber{0.2}/2^{k+1}$ for every choice of the polynomial degree $k$, and it decreases with the mesh size $h$ according to the theoretical convergence rates, thus, 
$\tau_{l} /\tau_{l+1} = 2^k h_{l}/h_{l+1}$.
Table~\ref{tab:convergence} displays convergence results for the two mesh families. 
The error measures are $\left(\sum_{n=1}^N\tau\norm[\strain,h]{\uvu[h]^{n}-\Ih\ovu^{n}}^2\right)^{\frac12}$ for the displacement and $\left(\sum_{n=1}^N\tau\norm[\Omega]{p_h^n-\lproj[h]{k}\overline{p}^n}^2\right)^{\frac12}$ for the pressure.
In all cases, the orders of convergence are in agreement with the theoretical predictions. In particular, it is observed that the optimal convergence rates stated in Theorem~\ref{thm:err_est} are reached.

\begin{table}\centering
  \caption{Convergence results on the Cartesian and Voronoi meshes for $k=1$. %% and $k=2$.
    OCV stands for order of convergence.
  \label{tab:convergence}}
  \begin{tabular}{ccccc}
    \toprule
    $h$ & $\left(\sum_{n=1}^N\tau\norm[\strain,h]{\uvu[h]^{n}-\Ih\ovu^{n}}^2\right)^{\frac12}$ & OCV 
    & $\left(\sum_{n=1}^N\tau\norm[\Omega]{p_h^n-\lproj[h]{k}\overline{p}^n}^2\right)^{\frac12}$ & OCV \\
    \midrule\multicolumn{5}{c}{Cartesian mesh family} \\ \midrule    
    \pgfmathprintnumber[zerofill]{6.25e-02}
    &  \pgfmathprintnumber[zerofill]{3.10e-02}
    &  ---
    &  \pgfmathprintnumber[zerofill]{3.87e-01}
    &  ---                               \\
    %------------------------------------------------------------------------------%
    % Cartesian
    %------------------------------------------------------------------------------%
    \pgfmathprintnumber[zerofill]{3.12e-02}
    &  \pgfmathprintnumber[zerofill]{8.52e-03}
    &  \pgfmathprintnumber[fixed zerofill,precision=2]{1.86}
    &  \pgfmathprintnumber[zerofill]{9.65e-02}
    &  \pgfmathprintnumber[fixed zerofill,precision=2]{2.00}         \\
    \pgfmathprintnumber[zerofill]{1.56e-02}
    &  \pgfmathprintnumber[zerofill]{2.22e-03}
    &  \pgfmathprintnumber[fixed zerofill,precision=2]{1.94}
    &  \pgfmathprintnumber[zerofill]{2.44e-02}
    &  \pgfmathprintnumber[fixed zerofill,precision=2]{1.98}         \\
    \pgfmathprintnumber[zerofill]{7.81e-03}
    &  \pgfmathprintnumber[zerofill]{5.61e-04}
    &  \pgfmathprintnumber[fixed zerofill,precision=2]{1.99}
    &  \pgfmathprintnumber[zerofill]{6.18e-03}
    &  \pgfmathprintnumber[fixed zerofill,precision=2]{1.99}         \\
    \pgfmathprintnumber[zerofill]{3.91e-03}
    &  \pgfmathprintnumber[zerofill]{1.41e-04}
    &  \pgfmathprintnumber[fixed zerofill,precision=2]{2.00}
    &  \pgfmathprintnumber[zerofill]{1.56e-03}
    &  \pgfmathprintnumber[fixed zerofill,precision=2]{1.99}         \\
    %------------------------------------------------------------------------------%
    % Voronoi
    %------------------------------------------------------------------------------%
    \midrule\multicolumn{5}{c}{Voronoi mesh family} \\ \midrule
    \pgfmathprintnumber[zerofill]{6.50e-02}
    &  \pgfmathprintnumber[zerofill]{3.28e-02}
    &  ---
    &  \pgfmathprintnumber[zerofill]{2.73e-01}
    &  ---                               \\
    \pgfmathprintnumber[zerofill]{3.15e-02}
    &  \pgfmathprintnumber[zerofill]{8.48e-03}
    &  \pgfmathprintnumber[fixed zerofill,precision=2]{1.87}
    &  \pgfmathprintnumber[zerofill]{6.58e-02}
    &  \pgfmathprintnumber[fixed zerofill,precision=2]{1.96}         \\
    \pgfmathprintnumber[zerofill]{1.61e-02}
    &  \pgfmathprintnumber[zerofill]{2.20e-03}
    &  \pgfmathprintnumber[fixed zerofill,precision=2]{2.01}
    &  \pgfmathprintnumber[zerofill]{1.63e-02}
    &  \pgfmathprintnumber[fixed zerofill,precision=2]{2.08}         \\
    \pgfmathprintnumber[zerofill]{9.09e-03}
    &  \pgfmathprintnumber[zerofill]{5.72e-04}
    &  \pgfmathprintnumber[fixed zerofill,precision=2]{2.36}
    &  \pgfmathprintnumber[zerofill]{4.24e-03}
    &  \pgfmathprintnumber[fixed zerofill,precision=2]{2.36}         \\
    \pgfmathprintnumber[zerofill]{4.26e-03}
    &  \pgfmathprintnumber[zerofill]{1.42e-04}
    &  \pgfmathprintnumber[fixed zerofill,precision=2]{1.83}
    &  \pgfmathprintnumber[zerofill]{1.05e-03}
    &  \pgfmathprintnumber[fixed zerofill,precision=2]{1.84}         \\
    \bottomrule
  \end{tabular}
\end{table}

%------------------------------------------------------------------------------%

\section*{Acknowledgements}

This work was partially funded by the Bureau de Recherches G\'{e}ologiques et Mini\`{e}res. The work of M. Botti was additionally partially supported by Labex NUMEV (ANR-10-LABX-20) ref. 2014-2-006.
The work of D. A. Di Pietro was additionally partially supported by project HHOMM (ANR-15-CE40-0005).

%------------------------------------------------------------------------------%
%-------------------------------------------------------------------------------------------------------------
\bibliographystyle{plain}
\bibliography{nolibho}

\end{document}